\documentclass[article]{siamart171218}


\usepackage{graphicx}
\usepackage{amssymb,amsmath,amsfonts}
\usepackage{epsf}
\usepackage{wrapfig}
\usepackage[normalem]{ulem}
\usepackage[mathscr]{euscript}
\usepackage{mathtools}
\usepackage{ifmtarg}
\usepackage[utf8]{inputenc}
\usepackage{fancyvrb}
\newcommand{\ba}{\begin{eqnarray}} 
\newcommand{\ea}{\end{eqnarray}}
\newcommand{\be}{\begin{equation}} 
\newcommand{\ee}{\end{equation}} 
 
\newcommand{\nn}{\nonumber}

\newcommand{\lr}[1]{\left\langle #1 \right\rangle}
\newcommand{\intl}{\int\limits}

\newcommand{\liml}{\lim\limits}
\newcommand{\tbar}{\overline{T}} 
\newtheorem{remark}{Remark}

\newcommand{\R}{\mathbb{R}}

\renewcommand{\matrix}[2]{ \left(\begin{array}{#1} #2 \end{array}\right)}
  

\newcommand{\E}{\mathbb{E}}




\newcommand{\spi}{S\&P}


\newcommand{\mbf}{\mathbf{f}}
\newcommand{\mbv}{\mathbf{v}}
\newcommand{\mbx}{\mathbf{x}}

\def\given{\:|\:}
\usepackage{color}
\newcommand{\rot}[1]{{\color{black} #1}}
\newcommand{\blau}[1]{{\color{black} #1}}
\newcommand{\bi}[1]{Fig.~\ref{fig:#1}}
\newcommand{\e}[1]{eq.~(\ref{eq:#1})}
\definecolor{darkred}{RGB}{150, 0, 0}

\newcommand{\rhoss}{\rho_\text{ss}}





\DefineVerbatimEnvironment{Latex}{Verbatim}{numbers=left,numbersep=2mm}

\makeatletter

  %
  {\list{}{\leftmargin=#1\rightmargin=#1}\item[]}%
  {\endlist}
    \newcommand\contFrac{\@ifstar{\@contFracStar}{\@contFracNoStar}}

    \def\singleContFrac#1#2{%
        \begin{array}{@{}c@{}}%
            \multicolumn{1}{c|}{#1}%
            \\%
            \hline%
            \multicolumn{1}{|c}{#2}%
        \end{array}%
    }

    \def\@contFracNoStar#1{%
        \mathchoice{
            \@contFracNoStarDisplay@#1//\@nil%
        }{
            \@contFracNoStarInline@#1//\@nil%
        }{
            \@contFracNoStarInline@#1//\@nil%
        }{
            \@contFracNoStarInline@#1//\@nil%
        }%
    }

    \def\@contFracNoStarDisplay@#1//#2\@nil{%
        \@ifmtarg{#2}{%
            #1%
        }{%
            #1+\cfrac{1}{\@contFracNoStarDisplay@#2\@nil}%
        }%
    }

        \def\@contFracNoStarInline@#1//#2\@nil{%
            \@ifmtarg{#2}{%
                #1%
            }{%
                #1 \@@contFracNoStarInline@@#2\@nil%
            }%
        }
        \def\@@contFracNoStarInline@@#1//#2\@nil{%
            \@ifmtarg{#2}{%
                + \singleContFrac{1}{#1}%
            }{%
                + \singleContFrac{1}{#1} \@@contFracNoStarInline@@#2\@nil%
            }%
        }

    \def\@contFracStar#1{%
        \mathchoice{
            \@contFracStarDisplay@#1////\@nil%
        }{
            \@contFracStarInline@#1//\@nil%
        }{
            \@contFracStarInline@#1//\@nil%
        }{
            \@contFracStarInline@#1//\@nil%
        }%
    }

    \def\@contFracStarDisplay@#1//#2//#3\@nil{%
        \@ifmtarg{#2}{%
            #1%
        }{%
            #1 + \cfrac{#2}{\@contFracStarDisplay@#3\@nil}%
        }%
    }

        \def\@contFracStarInline@#1//#2\@nil{%
            \@ifmtarg{#2}{%
                #1%
            }{%
                #1 \@@contFracStarInline@@#2\@nil%
            }%
        }
        \def\@@contFracStarInline@@#1//#2//#3\@nil{%
            \@ifmtarg{#3}{%
                - \singleContFrac{#1}{#2}%
            }{%
                - \singleContFrac{#1}{#2} \@@contFracStarInline@@#3\@nil%
            }%
        }
\makeatother

\usepackage{color}

\usepackage{ulem}

\usepackage{blkarray}

\usepackage{lscape}

\usepackage{cancel}

\usepackage{setspace}

\usepackage{caption}
\usepackage{subcaption}




\usepackage{lipsum}
\usepackage{amsfonts}
\usepackage{graphicx}
\usepackage{epstopdf}
\usepackage{algorithmic}
\ifpdf
  \DeclareGraphicsExtensions{.eps,.pdf,.png,.jpg}
\else
  \DeclareGraphicsExtensions{.eps}
\fi

\numberwithin{theorem}{section}

\newcommand{\TheTitle}{A partial differential equation for the \rot{mean--return-time} phase of planar stochastic oscillators} 
\newcommand{\TheAuthors}{A. Cao, B. Lindner, and P. Thomas}

\headers{PDE for the mean--return-time oscillator phase}{\TheAuthors}

\title{{\TheTitle}\thanks{
\funding{B.L. was funded through Deutsche Forschungsgemeinschaft Grant
LI 1046/2-1 and BMBF Grant 01GQ1001A. A.C.  was supported by a grant from the Arthur I. Mendolia '41 Fellowship from the Case Alumni Foundation. A.C. and P.J.T. were funded by National Science Foundation grant DMS-1413770.
}}}

\author{
  Alexander Cao\thanks{Department of Mathematics, Applied Mathematics, and Statistics.
Case Western Reserve University,
Cleveland, Ohio, 44106, USA (\email{axc487@case.edu}).}
  \and
  Benjamin Lindner\thanks{Bernstein Center for Computational Neuroscience and Department of Physics. 
Humboldt University, 10115 Berlin, Germany (\email{benjamin.lindner@physik.hu-berlin.de}).}
  \and
  Peter J.~Thomas\thanks{Department of Mathematics, Applied Mathematics, and Statistics. Case Western Reserve University,
Cleveland, Ohio, 44106, USA (\email{pjthomas@case.edu}).}
}

\usepackage{amsopn}


\ifpdf
\hypersetup{
  pdftitle={\TheTitle},
  pdfauthor={\TheAuthors}
}
\fi


\externaldocument{CaoLindnerThomas_supplement_sub2}


\begin{document}

\maketitle

\begin{abstract}
Stochastic oscillations are ubiquitous in many systems. For deterministic systems, the oscillator's phase has been widely used as an effective one-dimensional description of a higher dimensional dynamics, particularly for driven or coupled  systems.  
Hence, efforts have been made to generalize the phase concept to the stochastic framework. One notion of phase due to Schwabedal and Pikovsky is based on the \rot{mean-return time} (\rot{MRT}) of the oscillator  but has so far been described only in terms of a numerical algorithm. Here we develop the boundary condition by which the partial differential equation for the \rot{MRT} has to be complemented in order to obtain the isochrons  (lines of equal phase)  of a two-dimensional stochastic oscillator, and rigorously establish the existence and uniqueness of the \rot{MRT} isochron function (up to an additive constant). We illustrate the method with a number of examples: the stochastic heteroclinic oscillator (which would not oscillate in the absence of noise);  the isotropic Stuart-Landau oscillator, the Newby-Schwemmer oscillator, and the Stuart-Landau oscillator with  polarized noise. For selected isochrons we confirm by extensive stochastic simulations that the return time  from an isochron to the same isochron (after one complete rotation) is always the mean first-passage time  (irrespective of the initial position on the isochron). Put differently, we verify that Schwabedal and Pikovsky's criterion for an isochron is satisfied. In addition, we discuss how to extend the construction to arbitrary finite dimensions.
Our results will enable development of analytical tools to study and compare  different notions of phase for stochastic oscillators.\footnote{Some results presented here appeared previously in the Master's thesis of the first author.}
\end{abstract}

\begin{keywords}
 stochastic oscillator; noise; first-passage-time problems; phase description
\end{keywords}

\begin{AMS}
 	60G99,  
	60J70,  
	60J25,  
	92B25  
\end{AMS}
\section{Introduction}
Stochastic oscillations are an important phenomenon in many areas of science and technology. The activity of certain brain areas \cite{BurkeDePaor2004BICY,PopovychHauptmannTass2006BICY}, repetitive motor activities \cite{GuckenheimerJaveed2018BICY,VanMourikDaffertshoferBeek2006BICY},
the motion of small organelles called hair bundle in our inner ear organs \cite{AshmoreAvanDallosDierkesEtAl2010HearRsch,NeimanDierkesLindnerHanShilnikov2011JMNO}, and the lasing intensity of lasers under certain conditions \cite{Wieczorek2009PRE} are but a few examples that can be modeled by a noisy oscillator.

The dynamic and stochastic mechanisms for generating noisy oscillations are diverse as well. The classical linear example system is the  underdamped harmonic oscillator driven by white Gaussian noise \cite{ThoLin19,WanUhl45}, showing a finite phase coherence because of amplitude and phase fluctuations (similar systems appear in ecological and neural models and are referred  to as quasicycles \cite{LugMcK08,WalBen11}).  

More difficult to analyze are noisy  oscillations occurring in nonlinear stochastic systems. A prominent example is  a  limit cycle  perturbed by noise \cite{FeistelEbeling1978PhysicaA,Tass2007book-phase}.  Qualitatively different is the role of noise in excitable systems \cite{LinGar04}, in heteroclinic oscillators \cite{StoArm99,ThoLin14}, or in certain spatially extended systems \cite{DieJul12,KawSai04} ---  in these cases the driving fluctuations are {\em required} to observe an oscillation (although a noisy one).

In the description of deterministic oscillations and weakly coupled deterministic oscillators a phase description has turned out to be extremely useful in many situations but in particular in neuroscience \cite{Erm96,ErmentroutKopell1986SIAMJAM,HopIzh97,StiefelErmentrout2016JNphys}.
 Hence, more recently, different approaches have been developed to  define a phase also for stochastic oscillations \cite{SchPik13,ThoLin14}. 
 
Schwabedal and Pikovsky (henceforth referred to as \spi) introduced a notion of phase that is based on an algorithmic
numerical procedure involving the \rot{mean---first--return-time; for brevity, we will refer to the \emph{mean--return-time (MRT)}, described below}.  
Alternatively, two of the present authors used the complex phase of the eigenfunction of the backward Kolmogorov operator (the generator of the Markov process) to introduce  an asymptotic phase for stochastic oscillators \rot{\cite{ThoLin14,ThoLin19}}. The two approaches can be regarded as generalizations of the two (deterministically equivalent) notions of phase, one based on return times among a system of  Poincar\'{e} sections, the other on the asymptotic convergence of trajectories \cite{Guc75}. The question of whether one of these phase notions is superior to the other one was subject to some debate \cite{PhysRevLett.115.069401,PhysRevLett.115.069402} but will not be pursued here. 
What will be addressed in this paper is the question of how the algorithmic procedure 
introduced by \spi~in \cite{SchPik13} can be reformulated as the solution of a  partial differential equation. 
We mainly restrict attention to planar Markovian systems that display  stochastic oscillations. This is certainly more limited in scope than the algorithm suggested in \cite{SchPik13}, which has been also applied to non-Markovian examples.
Deriving a partial differential equation to determine the isochrons for the two-dimensional system (the lines that define equal phase), however, opens opportunities of more rigorous mathematical explorations of the \rot{MRT} phase.

\rot{In \cite{SchPik13} \spi~proposed a definition for the phase of a stochastic oscillator in terms of a system of Poincar\'e sections $\{\blau{\ell_{\rm MRT}}(\phi), 0\le \phi\le 2\pi\},$ foliating a domain ${\cal{R}}\subset\R^2$ and possessing  the MRT property.  Intuitively, a section $\blau{\ell_{\rm MRT}}$ satisfies the \blau{MRT} property if for all points $\mbx\in\blau{\ell_{\rm MRT}}$, the mean return time from $\mbx$ back to $\blau{\ell_{\rm MRT}}$ is constant.  
\spi~write:
\begin{quotation}
``For a noisy system we define the isophase surface $J$ as a Poincar\'e surface of section, for which the mean first return time $J\to J$, after performing one full oscillation, is a constant T, which can be interpreted as the average oscillation period. In order for isophases to be well defined, oscillations have to be well defined as well: for example in polar coordinates, the radius variable must never become zero, so that one can reliably recognize each ``oscillation.'' Random processes for which this is not the case should be treated with care."
\end{quotation}

\spi~describe an iterative method for obtaining such a surface from an ensemble of two-dimensional trajectories 
(as obtained, for example, by simulating the Langevin \e{langevin}). Clearly, when naively interpreted, the \blau{MRT corresponds to a} mean first passage time from $\mbx\in\blau{\ell_{\rm MRT}}$ to $\blau{\ell_{\rm MRT}}$, \blau{and is therefore} identically zero; to avoid a trivial definition, \spi~require that the mean be calculated for all trajectories ``after performing one full oscillation.''

While the intended meaning is {intuitively} clear, a 
mathematical formulation of the problem has not previously been given. It is furthermore not clear whether the  method {proposed by \spi~will converge, nor} whether the isochrons (lines {sharing a common} \blau{MRT} phase) are uniquely determined. Last but not least, it is unclear whether the isochrons could also be  obtained by the solution of the partial differential equation (PDE) that governs the \rot{mean--first-passage time} to an absorbing boundary \cite{Gar97}. 

 \spi~apply their iterative method to a wide range of systems (including examples of the kind above, \e{langevin}, but also three-dimensional Markovian systems, \textit{e.g.}~two-dimensional dynamics with colored noise). 
 We show here for the special class of two-dimensional stochastic oscillators driven by white Gaussian noise, \blau{under natural assumptions given below,} that isochrons with the \blau{MRT} property are uniquely defined, and that they correspond to the level curves of the solution of the \blau{mean---first--passage-time} PDE with a periodic-plus-jump condition on an arbitrary simple {curve} connecting the inner and outer boundaries.}

Our paper is organized as follows. 
\rot{In \S\ref{sec:model} we}  
introduce a class of two-dimensional Markovian models  and state our assumptions.
Furthermore, we  develop the \rot{MRT} isochrons as the level sets of a function $T$, unique up to an additive constant; we formulate the boundary conditions complementing the standard  partial differential equation governing this function.  
\rot{We provide two distinct derivations of the main result.  In  \S \ref{ssec:strip} we utilize the unwrapping construction and argue that the MRT function should assume a certain limiting form far upstream of any given transverse boundary.  In \S \ref{sec:proof} we leverage properties of strongly elliptic second order differential operators to give a mathematically precise statement and rigorous proof of the same result. 
Specifically,} through an application of Fredholm theory for strongly elliptic PDEs, we establish  existence and uniqueness of the system of \rot{mean--return-time} isochrons under mild conditions. 
 In addition, \rot{in \S\ref{sec:examples}} we illustrate our results by a number of examples, for which we solve the corresponding PDE with the periodic-plus-jump condition numerically.   
 In \S \ref{sec:ndim} we briefly address the extension of our results to stochastic oscillators in $n>2$ dimensions. We conclude with a short discussion of our results and possible extensions to more general systems.     \rot{In the supplementary materials we discuss details of the numerical method and provide an additional example.}
 
\section{Model and derivation of the main result}
\label{sec:model}
We first present the model of a two-dimensional stochastic oscillator driven by white Gaussian noise, and derive a PDE and its boundary conditions, the solution of which provides the isochrons of the \rot{MRT} phase. 

\subsection{The considered model and the \rot{MRT} phase}
\label{sec:main}
We consider a  planar Langevin system of the form
\begin{figure}[h]
\includegraphics[width=\textwidth]{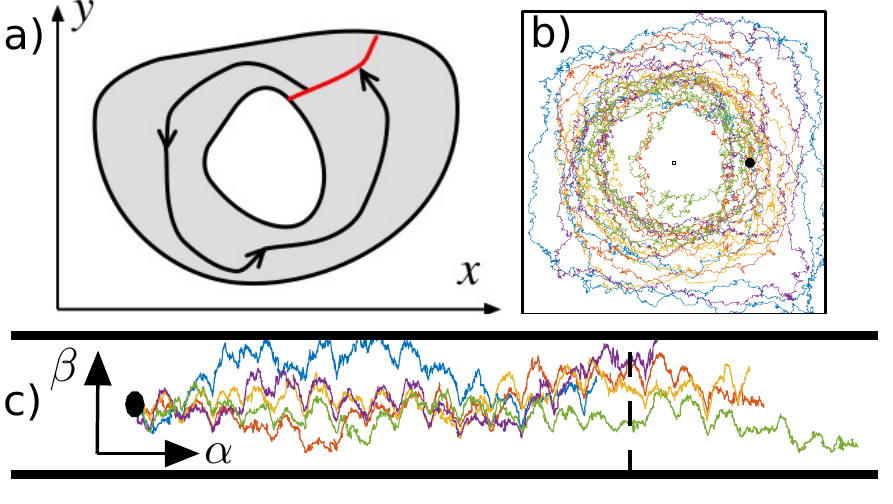}
\caption{\label{fig:sketch_xy} Original ring-like domain with a \rot{counterclockwise} net probability flow and reflecting inner and outer boundaries {(a)}. An isochron (sketched in red) can be defined by the property that the mean return time from any point of this line to itself (after one complete rotation) is given by the overall mean rotation time of the stochastic oscillator.  Five trajectories of the noisy heteroclinic oscillator, \e{hetero}, on the original domain {(b)} and in unwrapped coordinates {(c)}. The common initial condition is marked with a black dot. 
The inner boundary of the annulus (b) is the square $\max(|x|,|y|)=\epsilon=0.05$, which is mapped to the lower edge of the unwrapped domain, $\beta=-1$.  The outer boundary of the annulus is mapped to the upper edge of the unwrapped domain, $\beta=+1$.  The transformation maps point $(x,y)$ to $(\alpha,\beta)$ such that $\tan(\alpha)=y/x$, with $\alpha$ continuous along trajectories. For the quarter wedge $x\ge|y|$ of the annular domain, we set $\beta=-1+2(x-\epsilon)/(\frac{\pi}{2}-\epsilon)$.  In the other quarter wedges of the annular domain, $\beta$ is determined by a similar construction. Vertical dashed line marks one full rotation from the initial point.  Compare \bi{het_jump_results}.
}
\end{figure}
\ba \label{eq:langevin}
\dot{x}&=&f_x(x,y)+g_{x1}(x,y)\xi_1(t)+g_{x2}(x,y)\xi_2(t)\\
\nn
\dot{y}&=&f_y(x,y)+g_{y1}(x,y)\xi_1(t)+g_{y2}(x,y)\xi_2(t)
\ea
where $\xi_{1,2}(t)$ is white Gaussian noise with $\lr{\xi_i(t)\xi_j(t')}=\delta_{ij}\delta(t-t')$;  the multiplicative noise terms are interpreted in the sense of Ito.\footnote{We could equivalently write the system $dx=f_x(x,y)\,dt+g_{x_1}(x,y)\,dW_1(t)+g_{x_2}(x,y)\,dW_2(t)$ and $dy=f_y(x,y)\,dt+g_{y_1}(x,y)\,dW_1(t)+g_{y_2}(x,y)\,dW_2(t)
$, with $dW_1$ and $dW_2$ are the increments of independent standard Wiener processes. See \S \ref{sec:proof} for a rigorous formulation.  For completeness we repeat the equations in Stratonovich form in \S\ref{supp:Stratonovich}.} The domain $\cal{R}$ has the topology of an annulus, formed by the complement of one smooth (piecewise $C^1$) star-shaped domain within another (with common center point), see \bi{sketch_xy}a. 
Outer and inner boundaries will be referred to as $R_{\rm outer}$ and  $R_{\rm inner}$, respectively. 

We assume that the total noise amplitudes do not vanish anywhere in the domain, \textit{i.e.}~
\be\label{eq:posdef1}
g_{x1}^2(x,y)+g_{x2}^2(x,y)>0  \;\;\; \mbox{and}\;\;\; g_{y1}^2(x,y)+g_{y2}^2(x,y)>0 \;\;\; \forall (x,y)\in\cal{R}, 
\ee
that the noise be nonsingular, i.e.\footnote{This condition is required for strong ellipticity, see the proof of existence in \S \ref{sec:proof}.}
\be\label{eq:posdef2}
g_{x1}(x,y)g_{y2}(x,y)-g_{x2}(x,y)g_{y1}(x,y)\not=0\;\;\;\forall (x,y)\in\cal{R},
\ee
and that all drift and diffusion functions are smooth ($C^2$) and bounded; in particular we require that a unique stationary probability density exists and we have a non-vanishing net stationary probability current.  Without loss of generality, \rot{our proof} will assume the current circulates  counterclockwise.  \rot{In \S \ref{sec:examples} we consider specific examples, some of which rotate counterclockwise and others clockwise.}

\subsection{Disambiguation of the return-time problem by mapping to an infinite strip}
\label{ssec:strip}


The region $\mathcal{R}$ is diffeomorphic to the annulus $0<R_-\le r \le R_+<\infty$.
The latter can be mapped to \rot{an angular variable} $\alpha(x,y) \in[0,2\pi)$ and \rot{an amplitude-like variable} $\beta(x,y)\in [-1,1]$, corresponding to a rectangular domain. One possible mapping from the original domain to the rectangle is
\begin{align}
\alpha(x,y)&=\left\{\begin{array}{ll}
\arctan(y/x),&|x|>0\\
\pi/2,&x=0, y>0\\
3\pi/2,&x=0,y<0
\end{array}  \right.\\
\beta(x,y)&=-1+2\frac{\sqrt{x^2+y^2}-R_{\rm inner}(\alpha(x,y))}{|R_{\rm outer}(\alpha(x,y))-R_{\rm inner}(\alpha(x,y))|}
\end{align}

\begin{figure}[h]
	\centerline{\includegraphics[width=0.8\textwidth]{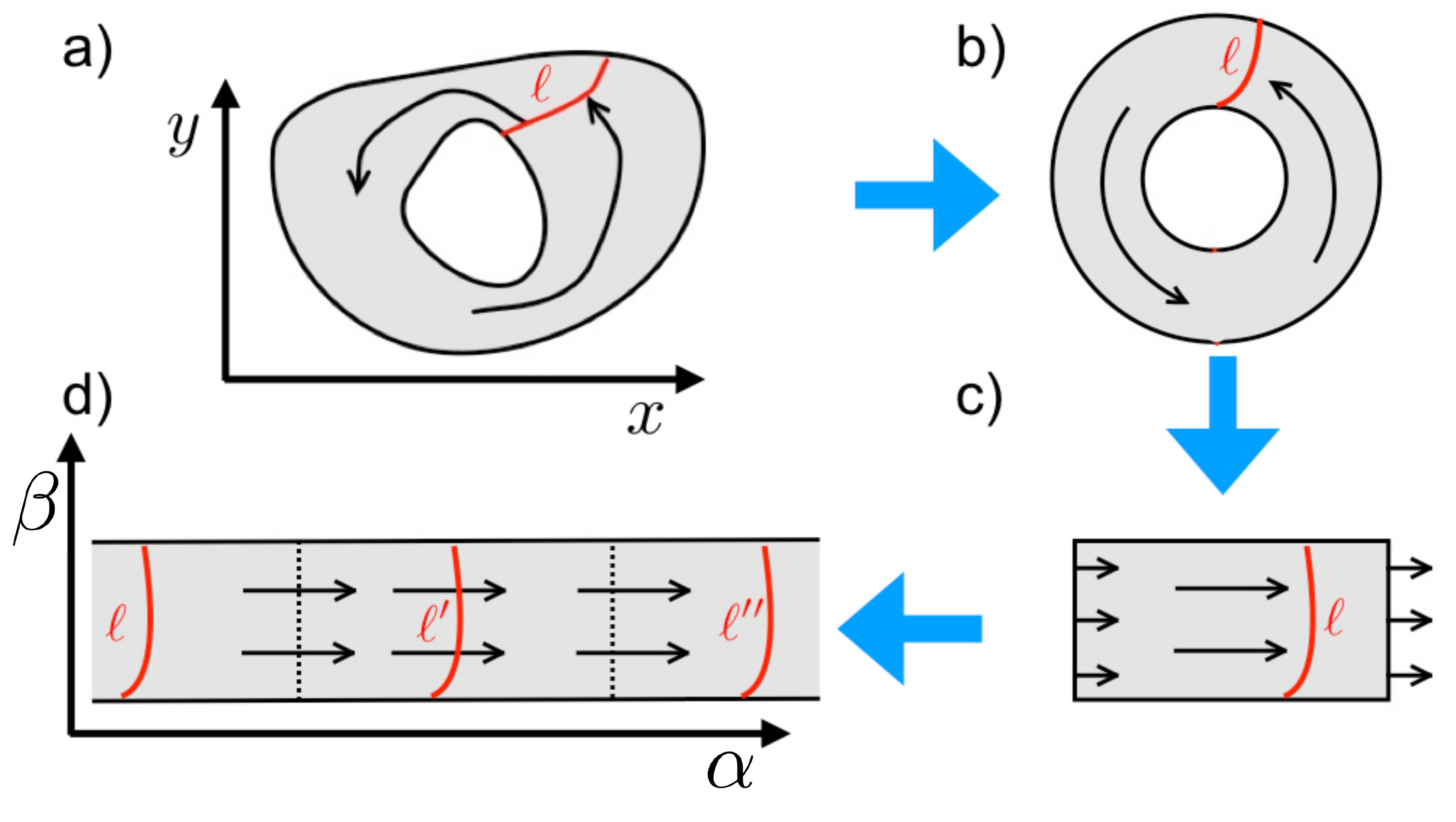}}
\caption{\label{fig:sketch_mapping} The original ring-like domain (a) {(\bi{sketch_xy})} is mapped to a true annulus (b), then to the geometric phase and amplitude with periodic boundary conditions (c) and, finally, to a system with unwrapped phase and amplitude (d). {In (a),} the first return to a simple connection between the two boundaries, $\ell$, after one rotation has been performed, is not well defined: we cannot simply turn the line $\ell$ into an absorbing boundary because then the condition of the performed rotation is not met and if we start on $\ell$ the mean return time will be zero. In the unwrapped phase and amplitude (d), we deal with infinitely many concatenated copies of the system and thus the return-time problem is now well posed: it is given by the first passage problem between two adjacent copies of the line (e.g. from $\ell$ to $\ell'$ or from $\ell'$ to $\ell''$).   
}
\end{figure}

After the change of variables, the SDE attains a new form  
\ba \label{eq:langevin_polar}
\dot{\alpha}&=&f_{\alpha}(\alpha,\beta)+g_{\alpha 1}(\alpha,\beta)\bar{\xi}_1(t)+g_{\alpha 2}(\alpha,\beta)\bar{\xi}_2(t)\\
\nn
\dot{\beta}&=&f_{\beta}(\alpha,\beta)+g_{\beta 1}(\alpha,\beta)\bar{\xi}_1(t)+g_{\beta 2}(\alpha,\beta)\bar{\xi}_2(t)
\ea
with drift and diffusion coefficients uniquely defined by the functions $f_{x,y},g_{x_{1,2}},g_{y_{1,2}}$ \cite{Gar97} 
and again we interpret the equations in the sense of Ito. Note that all coefficients and their derivatives are smooth and periodic in $\alpha$, {\textit{i.e.}~$\forall k\in \mathbb{Z}$ and for  $j=1,2$,}
\ba 
f_{\alpha,\beta}(\alpha,\beta)&=&f_{\alpha,\beta}(\alpha+k\cdot 2\pi,\beta),\\
g_{\alpha,j}(\alpha,\beta)&=&g_{\alpha,j}(\alpha+k\cdot 2\pi,\beta),\\ 
g_{\beta,j}(\alpha,\beta)&=&g_{\beta,j}(\alpha+k\cdot 2\pi,\beta). 
\ea
 The exact form of the coefficients is not needed in the following, but we do require that also in the new variables the total noise intensity does not vanish anywhere {and the noise remain nonsingular} 
\ba \nonumber
g_{\alpha 1}^2(\alpha,\beta)+g_{\alpha 2}^2(\alpha,\beta)&> & 0  \\
g_{\beta 1}^2(\alpha,\beta)+g_{\beta 2}^2(\alpha,\beta)&> & 0 \\ \nonumber
g_{\alpha 1}(\alpha,\beta)g_{\beta 2}(\alpha,\beta)-g_{\alpha 2}(\alpha,\beta)g_{\beta 1}(\alpha,\beta) &\not= & 0 \quad \forall (\alpha,\beta).
\ea
Now instead of using periodic boundary conditions with respect to $\alpha$ on the rectangular domain, we can extend the domain to the left and right by infinitely many copies of it, \textit{i.e.}~we can employ a well-known construction  and use an {\em unwrapped} angle variable and in this way keep track of the rotations; see \bi{sketch_mapping} for a sketch of the different mappings.  

Specifically, the unwrapped phase now solves the problem of the ill-posed return-time problem in the original setup: it is unclear how to impose the condition of `one performed rotation' in the original $x-y$ space because with a stochastic driving we can never exclude that the trajectory might {encircle the inner boundary in the opposite direction from the mean rotation}. In the unwrapped-phase-amplitude domain, the return-time problem has a corresponding and well-posed first-passage-time problem from one copy of the line $\ell$ to the adjacent one on the right, \textit{e.g.}~from $\ell$ to   $\ell'$ or from $\ell'$ to $\ell''$.

The problem of finding the phase implied by the \rot{mean-return} or the mean--first-passage time corresponds in this setting to the problem of finding the {simple connecting curve} $\ell$ such that the mean-first-passage time from any point on $\ell$ to the absorbing boundary at $\ell'$ (and reflecting boundaries at $\beta=\pm 1$ and at $\alpha\to -\infty$) 
is equal to a constant (the mean rotation time of the stochastic oscillator).

\subsection{Forward and backward Kolmogorov operators, probability density and mean first-passage time function on the infinite strip}
\label{ssec:unwrapped}

We can write down the two versions of the Kolmogorov equation, the first one of which is the Fokker-Planck equation 
\be
\partial_t P(\alpha,\beta,t) = {\cal L} P(\alpha,\beta,t)=-\nabla \vec{J}(\alpha,\beta,t).
\label{eq:fpe_theta_r}
\ee
Here, $P(\alpha,\beta,t)$, if started with the initial condition $P(\alpha,\beta,0)=\delta(\alpha-\alpha_0)\delta(\beta-\beta_0)$, is the transition probability density for being at  $\alpha$ and $\beta$ at time $t$, if the system was at $\alpha_0$ and $\beta_0$ at time $t=0$. The vector $\vec{J}=(J_\alpha,J_{\beta})$  denotes the probability current, the $\alpha$ component of which, $J_\alpha(\alpha,\beta,t)$ is related to the mean number of trajectories crossing {a line $\alpha=\text{const}$ at a certain radial coordinate $\beta$.}  
{The probability current is defined in terms of the drift vector $\vec{f}=(f_\alpha,f_{\beta})^\intercal$ and diffusion matrix $\mathcal{G}=gg^\intercal$ by
\be
\vec{J}=\vec{f}P-\frac12
\left[
\begin{array}{cc}
\partial_\alpha(\mathcal{G}_{\alpha\alpha}P) + \partial_{\beta}(\mathcal{G}_{\alpha \beta}P) \\
\partial_\alpha(\mathcal{G}_{\beta \alpha}P) + \partial_{\beta}(\mathcal{G}_{\beta\beta}P )
\end{array}
\right],
\ee
with $g=\matrix{cc}{g_{\alpha 1}&g_{\alpha 2}\\ g_{\beta 1}&g_{\beta 2}}$.}
The reflecting boundary condition at $\beta=\pm 1$ reads:

\be
J_{\beta}(\alpha,\pm 1)\equiv 0\;\;\;\forall\alpha,
\ee 
where $J_{\beta}$ is the second (vertical) component of $\vec{J}$ in the rectangular coordinates.

The time-dependent solution {$P(\alpha,\beta,t)$} in the unwrapped phase does not approach a steady-state solution because the probability is constantly moving towards the right and diffusively spreading away from the center of mass.   
One can restore a steady-state solution by lumping all probability and all currents into one period (\textit{i.e.}~reverse the mapping from d to c in \bi{sketch_mapping}) by considering
\ba\quad\quad
p(\alpha,\beta,t)&=&\sum_{k=-\infty}^\infty P(\alpha+2k\pi,\beta,t) \xrightarrow{t\to\infty}\quad p_0(\alpha,\beta)\\
\quad\quad
\vec{\jmath}(\alpha,\beta,t)&=&\sum_{k=-\infty}^\infty \vec{J}(\alpha+2k\pi,\beta,t) \xrightarrow{t\to\infty}\quad \vec{\jmath}_0(\alpha,\beta)=(j_{\alpha,0}(\alpha,\beta),j_{\beta,0}(\alpha,\beta)).
\ea
The asymptotic (stationary) solution corresponds to the solution of \e{fpe_theta_r} with periodic boundary conditions over one period, e.g. $p_0(0,\beta)=p_0(2\pi,\beta)$. This follows from the smoothness of the function $ P(\alpha,\beta,t)$ as the solution of the Fokker-Planck equation under {our assumptions on} the coefficients.
The corresponding stationary probability current $\vec{j}_0=(j_{\alpha,0},j_{\beta,0})$ is related to the mean rotation rate as follows.  If we  integrate the $\alpha$-component of the current over the entire range of the radial variable, this yields the mean rotation rate or, equivalently, the inverse of the mean rotation time:
\be\label{eq:inverse-mean-rotation-time}
\overline{J}=\intl_{R_-}^{R_+}\!\! d\beta \; j_{\alpha,0}(\alpha,\beta)=\frac{1}{\tbar}.
\ee
Here $\alpha$ is an arbitrary but fixed value from the interval $[0,2\pi)$. Hence, one way to determine the mean rotation time is  to solve the stationary Fokker-Planck equation with periodic boundary conditions over the period and reflecting boundary conditions on the lower and upper boundaries in the radius and then to integrate the $\alpha$-component of the current over the radius. 

We {may} also write down the backward Kolmogorov equation for the probability density, which is a differential equation in terms of the initial position \rot{$\alpha_0,\beta_0$}, but we use the operator of this equation, the backward operator, ${\cal L}^\dagger$, in  a different context.  {Let $\tilde{\ell}$ be a line parameterized as $\tilde{\ell}=\{(\tilde{\alpha}(\tilde{\beta}),\tilde{\beta})\,:\, -1\le \tilde{\beta}\le -1\}$, with  $\tilde{\alpha}$ a $C^2$ function of $\tilde{\beta}$, and $\alpha<2\pi<\tilde{\alpha}(\beta)$, so that $(\alpha,\beta)$ is upstream of $\tilde{\ell}$.}
In the system with the unwrapped phase we can formulate the equation for the mean-first-passage time $\tilde{T}(\alpha,\beta)$ from initial position \rot{$(\alpha_0,\beta_0)$} {to such a line, with adjoint} reflecting boundary conditions \rot{at the interior and exterior boundaries of the cylindrical domain}
for $\beta=\pm 1$ and for $\alpha\to - \infty$ as follows:
\be\label{eq:tildeT}
{\cal L}^\dagger \tilde{T}(\alpha,\beta)=-1,
\ee 
where
\begin{align}\label{eq:Ldag_u}
{\cal L}^\dagger[u]=&\left[f_\alpha\frac\partial{\partial\alpha}+f_\beta\frac\partial{\partial\beta}+\frac12\left(\mathcal{G}_{\alpha\alpha}\frac{\partial^2}{\partial \alpha^2}+\mathcal{G}_{\alpha\beta}\frac{\partial^2}{\partial \alpha\beta}+\mathcal{G}_{\beta\alpha}\frac{\partial^2}{\partial \beta\alpha}+\mathcal{G}_{\beta\beta}\frac{\partial^2}{\partial \beta^2}\right)\right][u].
\end{align}
See \eqref{eq:bc-adjoint-reflect} for the adjoint reflecting (Neumann) boundary conditions.
Equation \eqref{eq:tildeT} holds true in the infinite strip and the values $(\alpha,\beta)$ can be as far off to the left as we wish.
In particular, 
because of the nonvanishing noise intensity, 
the solution $\tilde{T}(\alpha,\beta)$ is everywhere smooth and differentiable with respect to both variables \cite{mclean2000strongly}.

Next we give an intuitive derivation of the jump-periodic boundary condition the \rot{MRT} function should satisfy, which coincides with that established rigorously in \S\ref{sec:proof}. 
We cannot simply  lump everything back into one period as we did above with $P(\alpha,\beta,t)$ to obtain $p(\alpha,\beta,t)$, 
because $\tilde{T}$ is a time and not a probability density; a direct naive summation would certainly not make  sense. However, a kind of averaging summation of proper differences is meaningful as we will see in the following.
We first note that since the coefficients in the Langevin equations are all periodic in $2\pi$, this differential equation is identical for $\tilde{T}(\alpha,\beta)$ and $\tilde{T}(\alpha-2k\pi,\beta)$ ({i.e.}~for $k=1,2,3 \dots$, $\mathcal{L}^\dagger[\tilde{T}](\alpha,\beta)\equiv \mathcal{L}^\dagger[\tilde{T}](\alpha-2k\pi,\beta))$). 
Furthermore,  if the initial point is in our reference interval, \textit{i.e.}~$0\le \alpha\le 2\pi$, {we expect that} the time $\tilde{T}(\alpha-2k\pi,\beta)$ for $k\to\infty$  approaches the $k$-fold multiple of the mean rotation time $\tbar$ plus an order-one correction, 
{which would imply that} there exists a $T_{\rm max}$ such that 
\be
\left|\tilde{T}(\alpha-2k\pi,\beta)-k\tbar\right|<T_{\rm max}, \;\; \forall k \in \mathbb{Z}.
\ee
Then, the following sum of differences can be regarded as a kind of mean over the mean-first-passage times
\be
T_N(\alpha,\beta)=\frac{1}{N}\sum_{k=1}^N  (\tilde{T}(\alpha-2k\pi,\beta)-k\tbar).
\label{eq:sum}
\ee
The function $T_N(\alpha,\beta)$ obeys
\be
{\cal L}^\dagger T_N(\alpha,\beta)=-1.
\ee
because the constant terms ($k\tbar$) drop out (every term of the backward operator starts with a derivative) and each of the $N$ non-constant term contributes $-1/N$ to the right-hand side, \textit{i.e.}~in total the r.h.s. is -1 again.

We now define the function 
\be
T(\alpha,\beta)=\lim_{N\to\infty} T_N(\alpha,\beta)
\ee
as the limit for infinite summation {(assuming the limit exists)} and {argue} that this function  {should obey} a periodic-plus-jump condition. We first look at the boundary condition for $T_N$ for fixed values of $\alpha$ and $\beta$ with the reference interval
\ba
&&T_N(\alpha-2\pi,\beta)=\frac{1}{N}\sum_{k=1}^N  (\tilde{T}(\alpha-2(k+1)\pi,\beta)-k\tbar)=\frac{1}{N}\sum_{k=2}^{N+1}  (\tilde{T}(\alpha-2k\pi,\beta)-(k-1)\tbar)\nonumber\\
&&=\sum_{k=1}^{N} \frac{\tilde{T}(\alpha-2k\pi,\beta)-k\tbar}{N}+\tbar+\frac{\tilde{T}(\alpha-2(N+1)\pi,\beta)-(N+1)\tbar - (\tilde{T}(\alpha-2\pi,\beta)-\tbar)}{N}.\nonumber
 \ea
The last term vanishes in the limit $N\to\infty$ and we obtain
 \be
 T(\alpha,\beta)+\tbar=T(\alpha-2\pi,\beta).
 \label{eq:period_and_jump}
\ee
There is no constraint on the value of $\alpha$ except that it be smaller than $\tilde{\alpha}(\tilde{\beta})$ (parameterizing the absorbing boundary $\tilde{\ell}$) and thus we can express the function $T(\alpha,\beta)$ by a steady decrease in $\alpha$ and a purely periodic function in $\alpha$:
\be
T(\alpha,\beta)=U(\alpha,\beta)-\frac{\tbar}{2\pi} \alpha \;\; \mbox{with}\;\; U(\alpha,\beta)=U(\alpha+2k\pi,\beta),\; k\in \mathbb{Z}.
\ee
Two conclusions can be drawn.  First, we can solve 
\be\label{eq:backward_eqn_section_2}
{\cal L}^\dagger T(\alpha,\beta)=-1
\ee 
on the reference domain $[0,2\pi] \times [-1,1]$ with the periodic-plus-jump condition \e{period_and_jump} taken at $\alpha=0$  and adjoint-reflecting boundary conditions on the lines $\beta=\pm1$:
\ba
T(2\pi,\beta)+\tbar=&T(0,\beta), \;\; &\forall \beta\in[-1,1],\\
\left.{\cal L}_{\rm refl} T(\alpha,\beta)\right|_{\beta=\pm 1}=&0,  \;\; &\forall \alpha\in[0,2\pi].
\ea
{Here ${\cal L}_{\rm refl}u=0$ represents the adjoint reflecting boundary conditions, namely ${\cal L}_{\rm refl} = \mathcal{G}_{\beta\alpha}\partial_\alpha +\mathcal{G}_{\beta\beta}\partial_\beta $.}
As we establish in \S \ref{sec:proof}, under these conditions,
the function is only determined up to a constant and this constant is indeed the only trace left of the original absorbing boundary $\tilde{\ell}$.  Secondly, because \e{period_and_jump} holds true for all values of $\alpha$, the contour lines of $T(\alpha,\beta)=T_0$ fulfill the condition for an isochron $\rot{\ell_{\rm MFP}}$ by \spi, because from every point of the contour line $\ell_{\rm MFP}$
the mean time to the $2\pi$ shifted version of the contour line $\ell_{\rm MFP}'$ is equal to $\tbar$ by virtue of the periodicity condition \e{period_and_jump}, i.e.
\be
T(\alpha,\beta)\bigg|_{(\alpha,\beta)\in \ell_{\rm MFP}}= T(\alpha,\beta)\bigg|_{(\alpha,\beta)\in \ell_{\rm MFP}'}+\tbar.
\ee
\rot{
Finally, we note that the MRT phase $\Theta$ is given in terms of $T$ and an arbitrary constant $\Theta_0$ as
\be\label{eq:MRT-phase-in-alpha-beta}
\Theta(\alpha,\beta)=\Theta_0-T(\alpha,\beta)\frac{2\pi}{\overline{T}}.
\ee
With this definition, the MRT phase then obeys partial differential equation
\be
{\cal L}^\dagger \Theta=\frac{2\pi}{\overline{T}}
\ee 
with periodic-plus-jump condition $\Theta\to\Theta+2\pi$ along an arbitrary cut.
}
%

If the problem is given {originally} in polar coordinates (which will {be} the case for a few of our examples), \textit{i.e.}~if the starting point are the Langevin \e{langevin_polar}, we have solved the problem of finding an equation for the \rot{MRT} phase proposed by \spi. If the problem is stated originally in $(x,y)$, \e{langevin}, we reverse the mapping in \bi{sketch_mapping}, formulate the problem as a \rot{mean-return time} problem in the original domain with a periodic-plus-jump condition and find that the contour lines of the function $T(x,y)$ are the isochrons of the \rot{MRT} phase in the sense of \spi.  
\rot{Indeed, in accordance with equation \e{MRT-phase-in-alpha-beta}, the MRT phase in $(x,y)$ coordinates is given by 
\be\label{eq:MRT-phase-in-xy}
\Theta(x,y)=\Theta_0-T(x,y)\frac{2\pi}{\overline{T}}.
\ee}
Briefly, we first have to solve for the stationary probability density
\be
{\cal L} p_0(x,y)=0 \;\; \mbox{with} \;\; \left. {\cal R}_p p_0(x,y)\right|_{(x,y)\in R_\pm}=0 ,\; \int\int dx\; dy\; p_0(x,y)=1
\ee
from which we can extract the stationary current and the mean rotation period 
\be
\vec{\jmath}={\cal J} p_0(x,y)=(j_x(x,y),j_y(x,y)),\;\; \tbar=\frac{2\pi}{\int_{R_-(y=0)}^{R_+(y=0)} dx\, j_y(x,0)}.
\ee
{Here ${\cal R}_p p_0=\mathbf{n}_\pm^\intercal\nabla p_0=0$ gives the reflecting boundary conditions for the (forward) Fokker-Planck operator, where $\mathbf{n}_\pm$ is the local unit normal vector for the boundary at $(x,y)\in R_\pm$, respectively.}
We then solve the equation for the \rot{mean return time} in the original domain {with a jump across the line $R_-\le y\le R_+, x=0$},
\be\label{eq:this-is-the-equation-we-actually-solve-numerically}
{\cal L}^\dagger T(x,y)=-1\;\; \mbox{with} \;\;  \left. {\cal R}_{T} T(x,y)\right|_{(x,y)\in R_\pm}=0, \;\liml_{\varepsilon \to 0^+} \left(T(-\varepsilon,y)-T(\varepsilon,y)\right)\tbar.
\ee
\rot{The backward operator is written out in the supplement (\S\ref{supp:Stratonovich}) for both the Ito and Stra\-to\-novich interpretations of the original stochastic differential equations. Furthermore, in} \e{this-is-the-equation-we-actually-solve-numerically}
 ${\cal R}_{T}u= \sum_{j=1,2}n_j\sum_{k=1,2} \mathcal{G}_{jk}\partial_k u =0$ gives the adjoint reflecting or Neumann boundary conditions.
The isochrons of the \rot{MRT} phase are then given by the contour lines of $T(x,y)$ modulo $\tbar$ if the line crosses the boundary condition on $y=0$.  
\bi{het-finite-diff} \rot{(supplemental material)} illustrates the construction for a square domain \rot{used to solve \e{this-is-the-equation-we-actually-solve-numerically}.}

\rot{We round out our discussion of the MRT phase derivation with several comments:}\rm
\begin{remark} \rm
The jump condition can be imposed on any simple connection between the inner and outer boundaries. Furthermore, by the PDE and its boundary conditions, $T(x,y)$ is only determined up to an additive constant. 
\end{remark}
\begin{remark}\rm
\label{rem2} 
It may appear strange that we have to solve {\em two} PDEs (one for $p_0(x,y)$ and one for  $T(x,y)$) to solve the problem, especially because the first equation serves only to determine $\tbar$, the mean rotation time, which should be computable from the PDE for the \rot{MRT} in the first place. In fact, it can be shown by Green's-function techniques, that the boundary conditions of the PDE for $T(x,y)$ uniquely determine in a self-consistent manner $\tbar$. Thus, it would in principle suffice to solve only the PDE for $T(x,y)$. In all investigated examples we found, however, that the subsequent solution of the two equations  is numerically more practical and efficient.
\end{remark}
\begin{remark}\rm
The mapping to the annulus assumes a smooth boundary in the $(x,y)$ domain.  However, 
we do not see any  problem in principle to generalize the arguments used to a mapping that is only piecewise smooth. One of our examples, the heteroclinic oscillator on the square domain, is of such a type and does not seem to be problematic. 
\end{remark}
\begin{remark}\rm
Although, as elaborated in \S \ref{sec:proof} we require the noise to be nonsingular in order to rigorously establish existence and uniqueness of solutions, it is possible that this condition may not be strictly necessary.  For example,  in the numerical example with polarized noise below (\S \ref{ssec:y-polarized-noise}) the nonsingular noise condition is violated, nevertheless the numerical procedure based on the PDE appears to give the correct system of isochrons.
\end{remark}

\subsection{\rot{Relation to the asymptotic phase for deterministic systems}}

\rot{If a deterministic system of ordinary differential equations
\be\label{eq:limitcycle_ode}
\frac{d\mbx}{dt}=\mbf(\mbx),\quad\mbx\in\R^n
\ee
has a hyperbolically stable limit cycle solution $\gamma$ (a closed, isolated periodic orbit), with period $\tau$, we may define the phase $\Theta\in[0,2\pi)$ for points in $\Gamma=\{\gamma(t)\given 0\le t< \tau\}$ so that $\Theta=0$ for some reference point on $\Gamma$ and $d\Theta/dt=2\pi/\tau$.  We can extend this function to define the \emph{asymptotic phase} $\Theta$ for any point $\mbx_0$ in the basin of attraction (the stable manifold) of $\Gamma$, so that the trajectory starting at $\mbx_0$ converges to the periodic trajectory $\gamma(t+\tau \Theta(\mbx_0)/2\pi)$ as $t\to\infty$ \cite{ErmentroutTerman2010book,Guc75}.   Applying the chain rule, the asymptotic phase function must satisfy 
\be\label{eq:phase_limit_cycle}
\frac{d\Theta(\mbx)}{dt}=\mbf(\mbx)^\intercal \nabla\Theta(\mbx)=\frac{2\pi}{\tau}
\ee
for all $\mbx$ in the basin of attraction of $\Gamma$.  The boundary condition for this  linear first-order  and nonhomogeneous partial differential equation is set by continuity of $\Theta$ at the limit cycle $\Gamma$.

Now consider a family of stochastic differential equations of the form \e{langevin}, but with the noise scaled by a small parameter $\sqrt{\epsilon}$, and the corresponding family of solutions to the backward  \e{backward_eqn_section_2}, which we may write (for the planar case $\mbx\in\R^2$) as
\ba
\frac{d\mbx}{dt}&=&\mbf(\mbx)+\sqrt{\epsilon}\left(g_1(\mbx)\xi_1(t)+g_2(\mbx)\xi_2(t)  \right)\\
\label{eq:bkwd_epsilon_noise}
-1&=&\mathcal{L}_\epsilon^\dagger\left[T_\epsilon(\mbx)\right]=f(\mbx)^\intercal\nabla\left[T_\epsilon(\mbx)\right]+\frac\epsilon2\sum_{ij}\mathcal{G}_{ij}\partial^2_{ij}T_\epsilon(\mbx).
\ea
Suppose that, as $\epsilon\to 0$, $T_\epsilon(\mbx)$ converges uniformly on compact subsets of the domain to a $C^2$ function $T_0(\mbx)$.  Since for any $\epsilon$, $T_\epsilon$ is defined only up to an additive constant, we consider convergence in the sense that for arbitrary nonzero vectors $\mbv\in\R^2$, 
\be
\mbv^\intercal\left(\nabla T_0-\nabla T_\epsilon\right)\to 0,
\ee
for all $\mbx$ in the domain.  Fixing $\mbx$ and setting $\mbv= \mbf(\mbx)$, we see that  for each $\mbx$,
\be
\mbf^\intercal\nabla T_0-\mbf^\intercal\nabla T_\epsilon = 
\mbf^\intercal\nabla T_0-\left(-1-\frac\epsilon2\sum_{ij}\mathcal{G}_{ij}\partial^2_{ij}T_\epsilon\right)\to 0,\text{ as }\epsilon\to 0,
\ee
where we have used \e{bkwd_epsilon_noise}. Consequently if $T_\epsilon$ converges to a well-behaved function $T_0$ in this way, it must satisfy
\be\label{eq:time_function_limit_cycle}
\mathcal{L}_0^\dagger\left[T_0\right] = \mbf(\mbx)^\intercal\nabla\left[T_0(\mbx)\right]=-1. 
\ee
Comparing \e{phase_limit_cycle} and \e{time_function_limit_cycle}, evidently if \e{limitcycle_ode} has a stable limit cycle, then 
 the function $T_0$ must correspond with the deterministic asymptotic phase function $\Theta$ through the linear relation
 \be
 \Theta(\mbx)=\Theta_0-T_0(\mbx)\frac{2\pi}{\tau},
 \ee
 for arbitrary constant $\Theta_0$.
}


\section{Proof of existence and uniqueness of the isochron function $T$}
\label{sec:proof}

In this section we use Fredholm theory for strongly elliptic second order operators, and a maximum principle, to prove that the partial differential equation defining the \rot{mean--return-time} isochron function $T$ has a solution, and that the solution is unique up to an additive constant.  As in the previous section, we adopt coordinates $\alpha\in[0,2\pi)$ (the angular, periodic coordinate) and $\beta\in[-1,1]$ (the radial coordinate).  We will refer to the fundamental or local domain $\Omega=[0,2\pi)\times[0,1]$ and the extended domain $\Omega_\text{ext}=\R\times[-1,1]$.  
We assume the following:
\begin{enumerate}
\item[A1.] Transformed into $(\alpha,\beta)$ coordinates, the trajectories $(\alpha(t),\beta(t))$ of a strongly Markovian time-homogeneous process obey an Ito equation
\begin{equation}
\begin{split}
d\alpha &= f_1(\alpha,\beta)\,dt + g_{11}(\alpha,\beta)\,dW_1(t)+g_{12}(\alpha,\beta)\,dW_2(t)\\
d\beta &= f_2(\alpha,\beta)\,dt + g_{21}(\alpha,\beta)\,dW_1(t)+g_{22}(\alpha,\beta)\,dW_2(t)
\end{split}
\end{equation}
where $f_i, g_{ij}$ are $C^2$ on $\Omega_\text{ext}$.
\end{enumerate}
(Note: we will refer often below to the matrix $\mathcal{G}=gg^\intercal$, defined in terms of the $g_{ij}$.) 
\begin{enumerate}
\item[A2.] The functions $f_i$ and $g_{ij}$ are periodic in the first coordinate with period $2\pi$, \textit{i.e.}~$\forall \alpha\in\R$ and $i=1,2$, $f_i(\alpha+2\pi,\beta)=f_i(\alpha,\beta)$, and likewise for each $g_{ij}$.
\item[A3.] The second order differential operator $\mathcal{P}$ is strongly elliptic, where $\mathcal{P}$ is defined (following McLean)
\begin{equation}\label{eq:defineP}
\mathcal{P}u=-\sum_{j=1}^2\sum_{k=1}^2\partial_j(A_{jk}\partial_k u)+\sum_{j=1}^2 A_j\partial_j u \text{ on }\Omega_\text{ext}
\end{equation}
and $A_{jk}=-\frac{1}{2}\mathcal{G}_{jk}$ and $A_j=-\frac12\sum_{k=1}^2\partial_k\mathcal{G}_{jk}+f_j$.  
Along the lines $\beta=\pm1$ we impose Neumann boundary conditions 
\begin{align}\label{eq:bc-adjoint-reflect}
0=\sum_{j=1}^2\nu_j\sum_{k=1}^2 \mathcal{G}_{jk}\partial_k u
\end{align}
where $\nu=(0,\pm 1)$ is the outward unit normal at the respective boundary.
\end{enumerate}
We note that in the planar case ($n=2$) we consider here, strong ellipticity is guaranteed if the matrix 
\begin{equation}
\mathcal{G}=gg^\intercal=
\matrix{cc}{g_{11}^2+g_{12}^2  & g_{12}g_{22}+g_{21}g_{11}\\ 
g_{12}g_{22}+g_{21}g_{11} & g_{21}^2+g_{22}^2} 
\end{equation}
satisfies the nondegeneracy conditions
\begin{align}\label{eq:nondegeneracy}
\mathcal{G}_{11}>0,\quad \mathcal{G}_{22}>0,\text{ and }\det{\mathcal{G}}\not=0.
\end{align}

McLean's differential operator $\mathcal{P}$ corresponds to  Kolmogorov's backward operator $\mathcal{L}^\dagger$ occurring in the first-passage/return time problem, also known as the generator of the Markov process.  The adjoint $\mathcal{P}^*$ corresponds to Kolmogorov's forward operator (the Fokker-Planck operator) with reflecting boundary conditions at $\beta=\pm 1$, describing the evolution of probability densities forward in time.  We call \eqref{eq:bc-adjoint-reflect} \emph{adjoint reflecting} boundary conditions because $\mathcal{P}$ and $\mathcal{P}^*$ are adjoint operators on the appropriate function spaces.  Specifically, $\mathcal{P}$ acts on $L^\infty(\Omega)\cap C^2(\Omega)$ (bounded, twice differentiable functions) while $\mathcal{P}^*$ acts on $L^1(\Omega)\cap C^2(\Omega)$ (twice differentiable functions integrable on the local domain).

Note that, although we consider two independent noise sources, $dW_1$ and $dW_2$, we could also have $k>2$ independent noise sources (in this case $g$ would be an $n\times k$ matrix) without a fundamental change in the results, provided $\mathcal{G}$ is nonsingular.  

We further assume that:
\begin{enumerate}
\item[A4.] The process viewed on $\Omega$ (taking $\alpha$ mod $2\pi$) admits a density $\rho(\alpha,\beta,t)$ evolving according to 
\begin{equation}\label{eq:adjoint-pde}
\frac{\partial\rho}{\partial t}=\mathcal{L}\rho=\mathcal{P}^*\rho=-\sum_{j=1}^2\sum_{k=1}^2\partial_j(A_{kj}^*\partial_k \rho)-\sum_{j=1}^2\partial_j(A_j^*\rho)
\end{equation}
where $A_{kj}^*=A_{jk}$ and $A_j^*=A_j$, and we impose reflecting (Neumann) boundary conditions 
\begin{equation}\label{eq:adjoint-reflecting-bc}
0=\sum_{k=1}^2A_{kj}^*\partial_k\rho+A_j^*\rho
\end{equation}
at $\beta=\pm 1$ (for all $\alpha$), and periodic boundaries in $\alpha$, i.e.
\begin{equation}\label{eq:adjoint-periodic-bc}
\forall \beta\in[-1,1],\quad \rho(0,\beta)=\rho(2\pi,\beta).\end{equation}
We assume that the system has a unique stationary distribution $\rhoss\ge 0$, with $1=\int_\Omega\,d\alpha\,d\beta\rhoss(\alpha,\beta)$, \textit{i.e.}~satisfying the homogeneous equation
\begin{equation}\label{eq:adjointhomog}
\mathcal{P}^*\rhoss=0,
\end{equation}
together with the boundary conditions \eqref{eq:adjoint-reflecting-bc}-\eqref{eq:adjoint-periodic-bc}.
\end{enumerate}
We note the stationary flux vector $J_\text{ss}(\alpha,\beta)$ corresponds componentwise to 
\begin{equation}\label{eq:ssflux}
J_{\text{ss},j}(\alpha,\beta)=f_j\rhoss-\frac12\sum_{k=1}^2\partial_k\left(\mathcal{G}_{jk}\rhoss\right).
\end{equation}
Lastly, we assume the mean drift is nonzero and oriented to the right:
\begin{enumerate}
\item[A5.] If $\gamma:[-1,1]\to[0,2\pi]$ is any $C^1$ function whose graph $C_\gamma=\{(\alpha=\gamma(\beta),\beta)\; : \; -1\le \beta \le 1\}$ connects the inner and outer domain, separating $\Omega_\text{ext}$ into  left and  right connected components, with unit normal $\mathbf{n}(\beta)$ oriented into the right connected component, then the mean rightward flux through $C_\gamma$ is positive, \textit{i.e.}
\begin{equation}\label{eq:meanJ}
0<\overline{J}:=\int_{-1}^1 d\beta\,\mathbf{n}^\intercal(\beta) J_\text{ss}(\gamma(\beta),\beta).
\end{equation}
\end{enumerate}
The reciprocal of $\overline{J}$ is proportional to the mean period of the oscillator, 
\begin{equation}\label{eq:tbar}
\tbar=\left(\overline{J}\right)^{-1}.
\end{equation}

Our goal is to establish the existence of a function $T$ satisfying the inhomogeneous PDE with adjoint reflecting boundary conditions at $\beta=\pm 1$ and jump-periodic boundary conditions, namely
\begin{align}\label{eq:PT1}\nonumber
\mathcal{P}T= -1,&\quad\text{on }\Omega\\
\sum_{k=1}^2 \mathcal{G}_{2k}\partial_k T(\alpha,\pm 1)=0,&\quad\forall\alpha\in\R\\
T(\alpha,\beta)-T(\alpha+2\pi,\beta)=\tbar,&\quad\forall (\alpha,\beta)\in\Omega_\text{ext}.\nonumber
\end{align}

\begin{theorem}[Existence and Uniqueness of Solutions]\label{thm:main}
If Assumptions A1-A5 hold, then the partial differential equation \eqref{eq:PT1} with reflecting adjoint boundary conditions at $\beta=\pm 1$ and jump-periodic boundary conditions in the $\alpha$ coordinate  has a solution $T(\alpha,\beta)$ on $\Omega_\text{ext}$ and, by restriction, on $\Omega$.  Moreover, the solution is unique up to an additive constant.  
\end{theorem}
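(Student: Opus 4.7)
My plan is to reduce the jump-periodic boundary value problem to a genuinely periodic, inhomogeneous Neumann problem on the compact cylinder $\overline{\Omega}$ (topologically $S^1\times[-1,1]$) and then invoke the Fredholm alternative for strongly elliptic operators. Define
\begin{equation}
U(\alpha,\beta):=T(\alpha,\beta)+\frac{\tbar}{2\pi}\alpha,
\end{equation}
so that the jump condition in \eqref{eq:PT1} becomes the strict periodicity $U(0,\beta)=U(2\pi,\beta)$. A short computation using the divergence form \eqref{eq:defineP}, the symmetry $\mathcal{G}_{12}=\mathcal{G}_{21}$, and the identities $A_{jk}=-\tfrac12\mathcal{G}_{jk}$, $A_j=-\tfrac12\sum_k\partial_k\mathcal{G}_{jk}+f_j$ yields $\mathcal{P}\alpha=f_1$. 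Hence $U$ solves
\begin{equation}
\mathcal{P}U = -1+\frac{\tbar}{2\pi}\,f_1 \quad\text{on }\Omega,
\end{equation}
with periodic boundary conditions in $\alpha$ together with the inhomogeneous adjoint-reflecting condition $\sum_k\mathcal{G}_{2k}\partial_k U|_{\beta=\pm 1}=\frac{\tbar}{2\pi}\mathcal{G}_{21}(\alpha,\pm 1)$. Existence and uniqueness for $T$ thus reduce to the same questions for $U$.

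Under A1--A3, $\mathcal{P}$ is strongly elliptic with $C^2$ coefficients on $\overline{\Omega}$, and under A4 the formal adjoint $\mathcal{P}^*$ with reflecting Neumann plus periodic boundary data has a one-dimensional kernel spanned by $\rhoss$. Fredholm theory for strongly elliptic boundary-value problems (e.g., McLean, Ch.\ 4) then makes $\mathcal{P}$ with the corresponding adjoint-reflecting boundary conditions Fredholm of index zero, so the inhomogeneous problem for $U$ is solvable iff a single solvability condition of the form $\langle\rhoss,\text{data}\rangle=0$ holds---where the pairing incorporates both the volume source $-1+\frac{\tbar}{2\pi}f_1$ and the inhomogeneous Neumann boundary data. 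The crucial step is to show that A5 guarantees this condition. Multiplying the equation for $U$ by $\rhoss$ and integrating by parts, using $\mathcal{P}^*\rhoss=0$, periodicity in $\alpha$, the reflecting condition $\mathbf{n}^\intercal J_\text{ss}=0$ on $\beta=\pm 1$, and the flux formula \eqref{eq:ssflux}, a straightforward calculation shows
\begin{equation}
\int_\Omega f_1\rhoss\,d\alpha\,d\beta = 2\pi\,\overline{J} + \tfrac12\int_0^{2\pi}\bigl[\mathcal{G}_{12}\rhoss\bigr]_{\beta=-1}^{\beta=+1}\,d\alpha.
\end{equation}
The boundary terms on the right cancel exactly against those generated by the inhomogeneous Neumann data on $U$ when reassembled into the Green's identity, and what survives is $-1+\tbar\,\overline{J}$, which vanishes by \eqref{eq:tbar}. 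Hence the Fredholm obstruction vanishes, and $U$ (and therefore $T$) exists.

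For uniqueness, any two solutions $T_1,T_2$ of \eqref{eq:PT1} differ by $V=T_1-T_2$ which is strictly $2\pi$-periodic in $\alpha$, satisfies the homogeneous adjoint-reflecting Neumann conditions on $\beta=\pm 1$, and obeys $\mathcal{P}V=0$. By the Fredholm index computation, $\ker\mathcal{P}$ on this space has the same dimension as $\ker\mathcal{P}^*$, namely one; since constants manifestly lie in $\ker\mathcal{P}$, $V$ must be constant. Equivalently, the strong maximum principle for strongly elliptic operators on the compact cylinder forces $V$ to be constant. Standard elliptic regularity then yields $U\in C^2(\overline{\Omega})$, and $T(\alpha,\beta)=U(\alpha,\beta)-\frac{\tbar}{2\pi}\alpha$ extends to all of $\Omega_\text{ext}$ by the ansatz. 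I expect the main obstacle to be the careful bookkeeping of boundary contributions in Green's identity: the terms from the inhomogeneous Neumann data on $U$ and the terms from $\partial_k(\mathcal{G}_{2k}\rhoss)$ in $J_\text{ss}$ must conspire so that exactly $\tbar\,\overline{J}-1$ survives, which is why both A4 (providing the one-dimensional adjoint kernel) and A5 (yielding $\tbar\,\overline{J}=1$) are indispensable.
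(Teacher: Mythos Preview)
Your proposal is correct and follows essentially the same route as the paper: the substitution $U=T+\frac{\tbar}{2\pi}\alpha$ to convert the jump-periodic problem into a genuinely periodic inhomogeneous Neumann problem, the appeal to the Fredholm alternative for strongly elliptic operators (McLean, Ch.~4), the identification of the adjoint kernel with $\rhoss$, and the verification of the solvability condition via the flux identity $\tbar\,\overline{J}=1$ are all exactly what the paper does. The only minor difference is in the uniqueness step: the paper isolates a separate lemma proving directly, via an elementary interior-maximum argument, that every $C^2$ solution of the homogeneous problem is constant, whereas you offer two alternatives (the index-zero Fredholm count together with the observation that constants lie in $\ker\mathcal{P}$, or the strong maximum principle on the compact cylinder)---either of which suffices.
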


The proof relies on Theorem 4.10 of \cite{mclean2000strongly} (stated as Theorem \ref{thm:McLean4.10} below), a version of the Fredholm alternative for strongly elliptic PDEs with general boundary conditions.  This theorem employs the following notation.  For a Lipschitz domain $\Omega$, and $s\in\R$,
$H^s(\Omega)$  denotes the Sobolev space of order $s$ (based on the $L_2$ norm).   $H_D^s(\Omega)$ denotes the subspace of functions in $H^s(\Omega)$ that equal zero when restricted to the portion of the boundary of $\Omega$ where a Dirichlet condition is enforced.  Since our boundaries do not include a Dirichlet component, for our problem $H_D^s(\Omega)\equiv H^s(\Omega)$.
In addition, $\widetilde{H}^{s}(\Omega)$ denotes the closure of $C_\text{comp}^\infty(\Omega)$ in $H^s(\R^n)$ (for the planar systems we consider, $n=2$) where $C_\text{comp}^\infty(\Omega)$ is the space of $C^\infty$ functions with compact support in $\Omega$. 
The space $H^{-1}(\Omega)$ is the set of distributions (or functions) $h(\alpha,\beta)$ that can be represented as $h(\alpha,\beta)=\partial_\alpha h_1(\alpha,\beta)+\partial_\beta h_2(\alpha,\beta)$ for some functions $h_1,h_2\in L_2(\Omega)$.  The inhomogeneities $h(\alpha,\beta)$ we consider will be $C^2$ functions of the coordinates, hence integrable over the compact domain $\Omega$, and members of $H^{-1}(\Omega)$. For further details, see \cite{mclean2000strongly}.

Theorem 4.10 will require that the operator $\mathcal{P}$ be \emph{coercive} on $H^s_D(\Omega)$.  Coercivity is a growth rate condition that holds for our system by virtue of the following  
\begin{theorem}[McLean 2000, Thm 4.7]\label{thm:McLean4.7}
Assume that $\mathcal{P}$ has scalar coefficients, and that $\mathcal{P}$ is strongly elliptic on $\Omega$.  If the leading coefficients satisfy
$$A_{kj}=A_{jk}\quad\text{ on }\Omega,\quad\text{ for all }j\text{ and }k,$$
then $\mathcal{P}$ is coercive on $H^1(\Omega)$.
\end{theorem}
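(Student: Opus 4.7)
The plan is to establish a Gårding-type inequality, which is what coercivity on $H^1(\Omega)$ means in this setting: I will produce constants $c_1>0$ and $c_2\ge 0$ such that
\begin{equation}
\operatorname{Re}\Phi(u,u) \ge c_1\|u\|_{H^1(\Omega)}^2 - c_2\|u\|_{L^2(\Omega)}^2 \quad \text{for all } u\in H^1(\Omega),
\end{equation}
where $\Phi$ is the sesquilinear form associated with $\mathcal{P}$ by formal integration by parts,
\begin{equation}
\Phi(u,v) = \int_\Omega \Bigl(\sum_{j,k} A_{jk}\,\partial_k u\,\overline{\partial_j v} + \sum_j A_j\,\partial_j u\,\bar{v}\Bigr)\,dx.
\end{equation}
The first step is simply to record this identification of $\mathcal{P}$ with $\Phi$ on the dense subspace $C^\infty(\overline{\Omega})$, where integration by parts is justified and no boundary terms arise (the natural boundary conditions are built into the variational form); coercivity then extends by density.

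The second step is to bound the principal part from below using strong ellipticity together with the symmetry hypothesis. Since the coefficients $A_{jk}$ are scalar (real-valued) and $A_{jk}=A_{kj}$, for any $\zeta\in\mathbb{C}^n$ one has
\begin{equation}
\operatorname{Re}\sum_{j,k} A_{jk}(x)\,\zeta_k\,\overline{\zeta_j}
= \sum_{j,k} A_{jk}(x)\bigl(\operatorname{Re}\zeta_j\operatorname{Re}\zeta_k + \operatorname{Im}\zeta_j\operatorname{Im}\zeta_k\bigr),
\end{equation}
so the pointwise strong ellipticity bound $\sum_{j,k}A_{jk}(x)\xi_j\xi_k\ge\theta|\xi|^2$ (valid for real $\xi$) applied to $\operatorname{Re}\nabla u$ and $\operatorname{Im}\nabla u$ separately yields
\begin{equation}
\operatorname{Re}\int_\Omega\sum_{j,k}A_{jk}\,\partial_k u\,\overline{\partial_j u}\,dx \;\ge\; \theta\,\|\nabla u\|_{L^2(\Omega)}^2.
\end{equation}

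The third step controls the first-order terms by Young's inequality. With $M=\max_j\|A_j\|_{L^\infty(\Omega)}$ and any $\epsilon>0$,
\begin{equation}
\Bigl|\int_\Omega A_j\,\partial_j u\,\bar{u}\,dx\Bigr| \;\le\; \epsilon\,\|\partial_j u\|_{L^2}^2 + \frac{M^2}{4\epsilon}\,\|u\|_{L^2}^2.
\end{equation}
Summing over $j$ and choosing $\epsilon=\theta/(4n)$ (here $n=2$) absorbs these contributions into half the principal part, giving
\begin{equation}
\operatorname{Re}\Phi(u,u) \;\ge\; \tfrac{\theta}{2}\|\nabla u\|_{L^2}^2 - K\|u\|_{L^2}^2
\end{equation}
for some $K\ge 0$ depending on $\theta$, $n$, and $M$. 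Adding $(\theta/2)\|u\|_{L^2}^2$ to both sides completes the full $H^1$-norm on the right and yields the desired Gårding inequality with $c_1=\theta/2$ and $c_2=K+\theta/2$.

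The main obstacle is precisely the point where the hypothesis $A_{jk}=A_{kj}$ is used. Without symmetry, only the symmetric part of $A$ would contribute to $\operatorname{Re}\sum A_{jk}\zeta_k\bar\zeta_j$, while strong ellipticity of the full operator $\mathcal{P}$ would allow an antisymmetric piece of $A_{jk}$ to contaminate the quadratic form with an imaginary cross-term; one could still recover coercivity using strong ellipticity of the symmetric part, but the clean one-line reduction above would break. For our diffusion operator, symmetry is automatic because $A_{jk}=-\tfrac12\mathcal{G}_{jk}=-\tfrac12(gg^\intercal)_{jk}$, so the hypothesis of the theorem is met and the outlined argument closes.
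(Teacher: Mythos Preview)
The paper does not supply its own proof of this statement; it is quoted verbatim as Theorem~4.7 from McLean's monograph and invoked as a black box to verify coercivity for the particular operator $\mathcal{P}$ arising from the diffusion. There is therefore nothing in the paper to compare against.

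Your argument is the standard G\aa rding-inequality proof and is correct. The key step---using realness and symmetry of $A_{jk}$ to split $\operatorname{Re}\sum_{j,k}A_{jk}\zeta_k\bar\zeta_j$ into two real quadratic forms on $\operatorname{Re}\zeta$ and $\operatorname{Im}\zeta$, to each of which the pointwise ellipticity bound applies---is exactly the mechanism McLean uses, and your absorption of the lower-order terms via Young's inequality is routine. Your closing remark that symmetry of $A_{jk}$ is automatic here because $A_{jk}=-\tfrac12(gg^\intercal)_{jk}$ is precisely the observation the paper makes immediately after stating the theorem.
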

The symmetry condition $A_{kj}=A_{jk}$ holds for our system because $\mathcal{G}=gg^\intercal$ is a symmetric matrix.  Strong ellipticity follows from our assumptions A1-A5, and will be established in the proof of our main theorem (\ref{thm:main}) below.

Finally, we will refer to the homogeneous problem 
\begin{align}\nonumber\label{eq:homog}
\mathcal{P}u&=0,\quad\text{on }\Omega\\
\sum_{k=1}^2 \mathcal{G}_{2k}\partial_k u(\alpha,\pm 1)&=0,\quad\forall\alpha\in[0,2\pi]\\
u(0,\beta)-u(2\pi,\beta)&=0,\quad\forall\beta\in[-1,1]\nonumber 
\end{align}
associated with the general inhomogeneous problem (with periodic boundary conditions)
\begin{align}\nonumber\label{eq:inhomog}
\mathcal{P}u&=h,\quad\text{on }\Omega\\
\sum_{k=1}^2 \mathcal{G}_{2k}\partial_k u(\alpha,\pm 1)&=g_N,\quad\forall\alpha\in[0,2\pi]\\
u(0,\beta)-u(2\pi,\beta)&=0,\quad\forall\beta\in[-1,1]\nonumber 
\end{align}

McLean's theorem 4.10 is more general than our problem requires.  We specialize to the case of scalar-valued functions ($m=1$, below), and our boundary contains no Dirichlet component ($\Gamma_D=\emptyset$).  Nevertheless for ease of comparison we state McLean's version of the theorem in full:
\begin{theorem}[McLean 2000, Thm 4.10]\label{thm:McLean4.10}
Assume that $\Omega$ is a bounded Lipschitz domain, and that $\mathcal{P}$ is coercive on $H_D^1(\Omega)^m$.  Let $h\in\widetilde{H}^{-1}(\Omega)^m, g_D\in H^{1/2}(\Gamma_D)^m$ and $g_N\in H^{-1/2}(\Gamma_N)^m,$ and let $W$ denote the set of solutions in $H^1(\Omega)^m$ to the homogeneous problem \eqref{eq:homog}.
There are two mutually exclusive possibilities:
\begin{enumerate}
\item[(i)] The homogeneous problem has only the trivial solution, i.e., $W=\{0\}$.  In this case,  the homogeneous adjoint problem \eqref{eq:adjointhomog}, with boundary conditions \eqref{eq:adjoint-reflecting-bc}-\eqref{eq:adjoint-periodic-bc}, 
also has only the trivial solution in $H^1(\Omega)^m$, and for the inhomogeneous problem \eqref{eq:inhomog} 
we get a unique solution $u\in H^1(\Omega)^m$.  Moreover,
\begin{equation}  
||u||_{H^1(\Omega)^m}\le C||h||_{\widetilde{H}^{-1}(\Omega)^m}+C||g_D||_{H^{1/2}(\Gamma_D)^m}+C||g_N||_{H^{-1/2}(\Gamma_N)^m}.  
\end{equation}
\item[(ii)] The homogeneous problem has exactly $p$ linearly independent solutions, \textit{i.e.} $\dim W=p$, for some finite $p\ge 1$.  In this case, the homogeneous adjoint problem \eqref{eq:adjointhomog}, with boundary conditions \eqref{eq:adjoint-reflecting-bc}-\eqref{eq:adjoint-periodic-bc},  
also has exactly $p$ linearly independent solutions, say $v_1,\ldots,v_p\in H^1(\Omega)^m$, and the inhomogeneous problem \eqref{eq:inhomog} 
is solvable in $H^1(\Omega)$ if and only if
\begin{equation}  \label{eq:thm4p13_solvability}
(v_j,h)_\Omega+(\gamma v_j,g_N)_{\Gamma_N}=\left( \widetilde{B}_\nu v_j,g_D \right)_{\Gamma_D}\quad\text{ for }1\le j \le p. 
\end{equation}
\end{enumerate}
\end{theorem}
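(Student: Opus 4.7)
The plan is to reduce this statement to the classical Fredholm alternative for compact perturbations of the identity on a Hilbert space, using the coercivity hypothesis to supply the required compactness. First, I would reformulate the inhomogeneous problem \eqref{eq:inhomog} in weak (variational) form on $H_D^1(\Omega)^m$. Multiplying $\mathcal{P}u=h$ by a test function $v\in H_D^1(\Omega)^m$ and integrating by parts produces a continuous sesquilinear form $a(u,v)$ on $H_D^1(\Omega)^m\times H_D^1(\Omega)^m$ and a continuous antilinear functional $F_{h,g_N,g_D}(v)$; the Neumann data $g_N$ is absorbed into the boundary term arising from integration by parts, while the Dirichlet data $g_D$ is handled by lifting to a fixed element $w\in H^1(\Omega)^m$ with prescribed trace on $\Gamma_D$ and then solving for $u-w\in H_D^1(\Omega)^m$.

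Next I would invoke coercivity to obtain G\aa rding's inequality: there exist $c>0$ and $\lambda_0\ge 0$ with $\mathrm{Re}\,a(u,u)+\lambda_0\|u\|_{L_2}^2\ge c\|u\|_{H^1}^2$. Hence the shifted form $a(u,v)+\lambda_0(u,v)_{L_2}$ is coercive, and Lax--Milgram yields an isomorphism $L:=\mathcal{P}+\lambda_0 I \colon H_D^1(\Omega)^m\to H_D^1(\Omega)^{m*}$. The equation $\mathcal{P}u=h$ is then equivalent to $(I-\lambda_0 L^{-1})u=L^{-1}h$, viewed on $H_D^1(\Omega)^m$. By the Rellich--Kondrachov embedding $H^1(\Omega)\hookrightarrow L_2(\Omega)$, the operator $\lambda_0 L^{-1}$ is compact on $H_D^1(\Omega)^m$, so $I-\lambda_0 L^{-1}$ is a compact perturbation of the identity, to which the Riesz--Schauder theory applies directly.

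From the classical Fredholm alternative, the kernel $W\subset H_D^1(\Omega)^m$ is finite dimensional and coincides with $\ker\mathcal{P}$ under the imposed boundary conditions, while $\dim\ker(I-\lambda_0 L^{-1})=\dim\ker(I-\lambda_0 (L^{-1})^*)$. I would identify the latter kernel with the null space of the formally adjoint problem \eqref{eq:adjointhomog}--\eqref{eq:adjoint-periodic-bc} via Green's second identity applied to the pairing $(\mathcal{P}u,v)-(u,\mathcal{P}^*v)$: the boundary terms vanish precisely when $v$ satisfies the adjoint reflecting and periodic conditions \eqref{eq:adjoint-reflecting-bc}--\eqref{eq:adjoint-periodic-bc}, while $u$ satisfies the original ones. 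Cases (i) and (ii) of the theorem then correspond to $\dim W=0$ (invertibility of $I-\lambda_0 L^{-1}$, hence of $\mathcal{P}$, yielding the a priori estimate by bounded inversion) and $\dim W=p\ge 1$ (range characterized as the annihilator of the $p$-dimensional adjoint kernel).

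Finally, the solvability condition \eqref{eq:thm4p13_solvability} would be read off by testing $h$ against a basis $v_1,\dots,v_p$ of the adjoint kernel: Green's identity gives
\begin{equation}
(v_j,\mathcal{P}u)_\Omega - (\mathcal{P}^*v_j,u)_\Omega = (\gamma v_j,B_\nu u)_{\Gamma_N}-(\widetilde{B}_\nu v_j,\gamma u)_{\Gamma_D},
\end{equation}
and substituting $\mathcal{P}u=h$, $\mathcal{P}^*v_j=0$, $B_\nu u=g_N$, $\gamma u=g_D$ yields exactly the stated bilinear compatibility relation. The main obstacle I anticipate is the last step: correctly identifying the conormal operator $\widetilde{B}_\nu$ adapted to the \emph{adjoint} principal part, and carefully tracking the duality pairings between $H^{\pm 1/2}(\Gamma_D)^m$ and $H^{\mp 1/2}(\Gamma_N)^m$ so that all boundary integrals make sense as distributions and the compatibility condition assumes the precise form claimed. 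Everything else---coercivity, Lax--Milgram, Rellich compactness, Riesz--Schauder---is routine once the weak formulation and trace framework are set up.
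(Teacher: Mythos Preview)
Your proof sketch is essentially correct and follows the standard route---coercivity $\Rightarrow$ G\aa rding $\Rightarrow$ Lax--Milgram for the shifted operator $\Rightarrow$ Rellich compactness $\Rightarrow$ Riesz--Schauder---which is in fact the approach McLean takes in his book. However, the paper does \emph{not} prove this theorem: it is quoted verbatim from McLean's monograph \cite{mclean2000strongly} and used as a black box to establish the paper's own main result (Theorem~\ref{thm:main}). So there is nothing to compare; you have supplied a reasonable outline of a proof the paper deliberately omits by citation.

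One small caution on your sketch: you correctly flag the identification of the adjoint conormal operator $\widetilde{B}_\nu$ and the associated trace dualities as the delicate point. In McLean's treatment this is handled through a careful definition of the conormal derivative as a map $H^1(\Omega)\to H^{-1/2}(\Gamma)$ via the first Green identity itself (rather than pointwise), which is what makes the boundary pairings well defined at the stated regularity. If you were to write this out in full you would need that machinery; your outline otherwise captures the argument.
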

In \eqref{eq:thm4p13_solvability} $\gamma$ refers to the ``trace operator" that maps a function $v$ to its value on the boundary.

Before proving Theorem \ref{thm:main} we state and prove a lemma concerning the solutions of the homogeneous problem, exploiting a maximum principle.
\begin{lemma}\label{lem:const}
If Assumptions A1-A5 hold, then every $C^2$ solution $u$ of the homogeneous  problem \eqref{eq:homog} is a constant function.
\end{lemma}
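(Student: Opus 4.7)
The plan is to apply a strong maximum principle together with Hopf's boundary-point lemma on the compact cylindrical manifold obtained from $\Omega$ by identifying $\alpha\sim\alpha+2\pi$.

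First I would rewrite $\mathcal{P}$ in non-divergence form. Using that $A_{jk}=-\tfrac12\mathcal{G}_{jk}$ is symmetric and $A_j = -\tfrac12\sum_k\partial_k\mathcal{G}_{jk}+f_j$, the terms produced by differentiating $\mathcal{G}_{jk}$ in \eqref{eq:defineP} cancel pairwise, yielding the backward--Kolmogorov form
\[
\mathcal{P}u = \tfrac12\sum_{j,k=1}^2\mathcal{G}_{jk}\,\partial^2_{jk}u + \sum_{j=1}^2 f_j\,\partial_j u.
\]
This operator has \emph{no} zeroth-order term and, by the nondegeneracy conditions \eqref{eq:nondegeneracy} from Assumption~A3, is uniformly elliptic on the compact cylinder with bounded coefficients. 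These are exactly the hypotheses of the classical strong maximum principle (e.g.\ Gilbarg--Trudinger, Theorem~3.5).

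Since $u\in C^2$ is $2\pi$-periodic in $\alpha$, it descends to a continuous function on a compact manifold whose only boundary is the pair of circles $\beta=\pm 1$, and it attains its maximum at some $(\alpha^*,\beta^*)$. If $\beta^*\in(-1,1)$ then the maximum is interior (the $\alpha$-coordinate having no boundary), so the strong maximum principle yields $u\equiv u(\alpha^*,\beta^*)$ on the connected interior, and continuity extends the equality to the closure. If instead $\beta^*=+1$ (the case $\beta^*=-1$ being symmetric), the adjoint reflecting condition \eqref{eq:bc-adjoint-reflect} reads
\[
\mathcal{G}_{21}\,\partial_\alpha u+\mathcal{G}_{22}\,\partial_\beta u=0,
\]
i.e.\ the vanishing of the directional derivative of $u$ along $\mathbf{w}=(\mathcal{G}_{21},\mathcal{G}_{22})$. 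Because the outward unit normal is $\nu=(0,1)$ and $\mathbf{w}\cdot\nu=\mathcal{G}_{22}>0$ by nondegeneracy, $\mathbf{w}$ is strictly transversal to the boundary. Hopf's boundary-point lemma for oblique derivatives (e.g.\ Gilbarg--Trudinger, Lemma~3.4) then forces $\mathbf{w}\cdot\nabla u(\alpha^*,1)>0$ unless $u$ is already constant, contradicting the boundary condition. Thus $u$ must be constant in this case as well.

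The subtle step is the boundary case: applying Hopf's lemma not to the outward normal derivative but to the \emph{oblique} conormal direction induced by the diffusion matrix. The standard statement of the lemma requires only that the direction of differentiation have a strictly positive outward component, and this transversality is exactly what the strong ellipticity assumption $\mathcal{G}_{22}>0$ supplies. Everything else---rewriting $\mathcal{P}$ in non-divergence form, verifying the hypotheses of the maximum principle, and exploiting compactness after the periodic identification---is routine elliptic theory.
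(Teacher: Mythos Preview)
Your proof is correct and is in fact more complete than the paper's own argument.  The paper proceeds by an elementary second-derivative calculation: assuming a \emph{strict} interior maximum at $(\alpha_0,\beta_0)$, it notes that the gradient vanishes there, writes the remaining second-order part of $\mathcal{P}u$ as $\operatorname{tr}(\mathcal{A}H)$ with $\mathcal{A}=(A_{ij})$ the (negative of the) diffusion matrix and $H$ the Hessian, assumes $\det H>0$ at the strict maximum, invokes the trace identity $\operatorname{tr}(\mathcal{A}H)=\operatorname{tr}(\mathcal{A}^{1/2}H\mathcal{A}^{1/2})$ to obtain a sign contradiction, and then concludes directly that $u$ is constant.

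Your route differs in that you invoke the full strong maximum principle together with Hopf's oblique-derivative lemma rather than this bare calculation, and this buys you rigor the paper's sketch lacks.  The paper's argument does not address (i) maxima located on the physical boundary $\beta=\pm1$, which you dispose of via Hopf's lemma using that the conormal direction $(\mathcal{G}_{21},\mathcal{G}_{22})$ has strictly positive outward component $\mathcal{G}_{22}>0$; nor (ii) interior maxima that are not strict, or at which the Hessian is only negative semidefinite (so that $\det H>0$ fails), which the strong maximum principle handles automatically.  Both proofs are maximum-principle arguments at heart, but yours is the complete version; the paper's is essentially a heuristic treatment of the nondegenerate interior case.
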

\begin{proof}[Proof of Lemma \ref{lem:const}]
Suppose a $C^2$ function $u(\alpha,\beta)$ satisfying \eqref{eq:homog} on the domain 
$\Omega=[0,2\pi]\times[-1,1]$ attains a strict maximum $u_+$ at a point $(\alpha_0,\beta_0)$ in the interior of $\Omega$. At this point $\partial_\alpha u=\partial_\beta u=0$.   Thus $\sum_iA_i\partial_i u\equiv0$ at $(\alpha_0,\beta_0)$.  Therefore (referring to \eqref{eq:defineP} of A3)
\begin{align}
0&=\sum_{i,j}A_{ij}\partial_i\partial_j u
= \text{tr}(\mathcal{A}(\alpha_0,\beta_0)H(\alpha_0,\beta_0)),\quad\text{ where}\\ \nonumber
\mathcal{A}&=\left(A_{ij}\right)\text{ and }\\ \nonumber
H&=\left(\partial_i\partial_j u\right)\text{ is the Hessian matrix}.
\end{align}
By our strong ellipticity assumption (A3), $\det \mathcal{A}>0$.  If $\alpha_0,\beta_0$ is a strict local maximum then at this point $\det(H)>0$.  Therefore $\det{\mathcal{A}H}>0$ as well.  Both $\mathcal{A}$ and $H$ are symmetric.  By the trace identity, 
$\text{tr}(\mathcal{A}H)=\text{tr}\left(\mathcal{A}^{1/2}H\mathcal{A}^{1/2}\right)$, so $\text{tr}(\mathcal{A}H)>0$, contradicting $\text{tr}(\mathcal{A}H)=0$.  Therefore $u$ cannot achieve a strict local maximum (or, by a parallel argument, a strict local minimum) anywhere in the interior of $\Omega$, so $u$ must be constant.
\end{proof}

Rather than proving the existence of the function $T(\alpha,\beta)$ directly, we prove the existence of a  function $U(\alpha,\beta)$ satisfying a related inhomogeneous PDE with periodic boundary conditions, and obtain $T$ from the equation
\begin{equation}\label{eq:TfromU}
T(\alpha,\beta)=U(\alpha,\beta)-\frac{\tbar}{2\pi}\alpha.
\end{equation} 

\begin{proof}[Proof of Theorem \ref{thm:main}]
Assuming A1-A5 hold, consider the inhomogeneous problem 
\begin{align}\nonumber\label{eq:inhomogU}
\mathcal{P}U=-1-f_1(\alpha,\beta)\frac{\tbar}{2\pi},&\quad\text{ on }\Omega\\ 
\sum_{k=1}^2 \mathcal{G}_{2k}\partial_k U(\alpha,\pm 1)=\pm\mathcal{G}_{21}(\alpha,\pm1)\frac{\tbar}{2\pi},&\quad\forall\alpha\in[0,2\pi]\\
U(0,\beta)-U(2\pi,\beta)=0,&\quad\forall\beta\in[-1,1]\nonumber 
\end{align}
which is equivalent to the system \eqref{eq:inhomog} (the object of Theorem \ref{thm:McLean4.10}) with inhomogeneities $h=-1-f_1\tbar/2\pi$ and $g_N=\pm\mathcal{G}_{21}\tbar/2\pi$ in the interior and on the reflecting boundary of $\Omega$, respectively.  The $\pm$ in the inhomogeneous boundary condition reflects the direction of the outward unit normal at the upper boundary, $\nu=(0,1)$ at $\beta=1$, \textit{versus} the lower boundary, $\nu=(0,-1)$ at $\beta=-1$.

To see that Theorem \ref{thm:McLean4.10} applies, note that 
\begin{enumerate}
\item $\Omega=[0,2\pi]\times[-1,1]$ is a bounded Lipschitz domain, by construction; 
\item the condition $h\in\widetilde{H}^{-1}(\Omega)^m$ holds since $f_1$ is assumed to be a $C^2$ function on a compact domain $\Omega$;
\item the condition $g_D\in H^{1/2}(\Gamma_D)^m$ is satisfied trivially since $\Gamma_D$ (the subset of the boundary on which a Dirichlet condition is imposed) is  the empty set; 
\item the condition $g_N\in H^{-1/2}(\Gamma_N)^m$ holds because $\mathcal{G}_{21}$, is assumed to be a $C^2$ function on the compact domain $\Omega$, \textit{a fortiori} on its boundary $0\le\alpha\le2\pi$, $\beta=\pm 1$; and
\item $\mathcal{P}$ is coercive, by Assumption A3 and Theorem \ref{thm:McLean4.7}.
\end{enumerate}
Thus we may apply Theorem \ref{thm:McLean4.10} to the system \eqref{eq:inhomogU}.
By Lemma \ref{lem:const}, all solutions of the homogeneous problem \eqref{eq:homog} are constant.  On the other hand, any function $u=\text{const}$ satisfies \eqref{eq:homog}. Hence $\dim W = p=1$, and case (ii) of Theorem \ref{thm:McLean4.10} applies.  Therefore, the inhomogeneous problem \eqref{eq:inhomogU} has solutions if and only if 
\begin{equation}\label{eq:solvability}
0=(v_1,h)_\Omega+(\gamma v_j,g_N)_{\Gamma_N},
\end{equation}
where $v_1$ is any solution of the adjoint homogeneous problem \eqref{eq:adjointhomog} with boundary conditions \eqref{eq:adjoint-reflecting-bc}-\eqref{eq:adjoint-periodic-bc}.  By Assumption A4, all solutions of the adjoint homogeneous problem are multiples of the steady state density $\rhoss$, so the solvability condition \eqref{eq:solvability} is equivalent to
\begin{equation}\label{eq:solvability2}
\begin{split}
0=&\int_\Omega d\alpha\,d\beta\,\rhoss(\alpha,\beta)\left(-1+\frac{\tbar}{2\pi}f_1(\alpha,\beta)\right) \\
&-\frac12\int_{\alpha=0}^{2\pi}d\alpha\,\left[ \rhoss(\alpha, 1)\mathcal{G}_{21}(\alpha,1)\frac{\tbar}{2\pi}-\rhoss(\alpha,-1)\mathcal{G}_{21}(\alpha,-1)\frac{\tbar}{2\pi}\right].
\end{split}
\end{equation}
{But the steady-state probability flux \eqref{eq:ssflux} satisfies
\begin{align}
\frac{2\pi}{\tbar}=&\int_\Omega d\alpha\,d\beta\,J_{\text{ss},1}\\ \nonumber
=&\int_\Omega d\alpha\,d\beta\left[f_1\rhoss-\frac12\left(
\frac{\partial}{\partial\alpha}\left(\mathcal{G}_{11}\rhoss\right)+
\frac{\partial}{\partial\beta}\left(\mathcal{G}_{12}\rhoss\right)
\right)
 \right]\\ \label{eq:tbar-middle-term}
 =&\E[f_1]-\frac12
\int_{\beta=-1}^1 d\beta\,\left(\mathcal{G}_{11}(2\pi,\beta)\rhoss(2\pi,\beta)-\mathcal{G}_{11}(0,\beta)\rhoss(0,\beta)\right)\\
\label{eq:tbar-last-term}
&-\frac12\int_{\alpha=0}^{2\pi} d\alpha\,\left(\mathcal{G}_{12}(\alpha,1)\rhoss(\alpha,1)-\mathcal{G}_{12}(\alpha,-1)\rhoss(\alpha,-1)\right).
\end{align}
The first and last terms in \eqref{eq:tbar-middle-term}-\eqref{eq:tbar-last-term} match corresponding terms in \eqref{eq:solvability2}, and the middle term in \eqref{eq:tbar-middle-term} vanishes because both $\mathcal{G}$ and $\rhoss$ are periodic in $\alpha$. Therefore we may rewrite \eqref{eq:solvability2} in terms of the steady-state flux:}
\begin{equation}
\tbar=\frac{2\pi}{\int_\Omega d\alpha\,d\beta\, J_{\text{ss},1}}.
\end{equation}
{Thus the mean period $\tbar$ emerges as a parameter required to have a specific value in order that the solvability condition should be satisfied.  But this condition is already given in Assumption A5.  (Compare Remark \ref{rem2} on page \pageref{rem2}.)}

We conclude that the solutions of the inhomogeneous problem for $U$ \eqref{eq:inhomogU} have the form $U(\alpha,\beta)+c$ for arbitrary $c\in\R$.  Having established the existence of the function $U$, it remains to show that the function $T$ defined by \eqref{eq:TfromU} solves the inhomogeneous problem \eqref{eq:PT1} with jump-periodic boundary conditions.  From \eqref{eq:defineP} it is clear that $\mathcal{P}(c)=0$ and $\mathcal{P}(\alpha)=f_1(\alpha,\beta)$.  By inspection of \eqref{eq:inhomogU}, it is evident that $\mathcal{P}(T)=-1$, as required.  The adjoint reflecting boundary conditions are satisfied, since clearly
\begin{equation}
\left[\mathcal{G}_{21}\frac{\partial}{\partial \alpha} + \mathcal{G}_{22}\frac{\partial}{\partial \beta} \right](\alpha)=\mathcal{G}_{21}.
\end{equation}
The jump-periodic boundary condition on the local domain, $T(0,\beta)=T(2\pi,\beta)+\tbar$, holds by construction.  On the extended domain $T(\alpha,\beta)=T(\alpha+2\pi,\beta)+\tbar$ for all $(\alpha,\beta)\in\Omega_\text{ext}$.

Finally, $T$ inherits uniqueness from $U$, up to an additive constant.
\end{proof}  

Remark: The properties of strong ellipticity and coercivity are preserved under $C^1$ diffeomorphisms (\cite{mclean2000strongly}, p.~156).  If we begin with a star-shaped domain and make the transformation described in \S \ref{ssec:strip} we thus establish the existence of a unique $T$-function (up to an additive constant) the level curves of which satisfy the \rot{MRT} property postulated by \spi.  

For an extension of theorem \ref{thm:main} to oscillators in $n>2$ dimensions, see \S \ref{sec:ndim}.

\section{Detailed examples}
\label{sec:examples}
We illustrate the solution of the backward equation and the resulting \rot{MRT} isochrons for  several example systems \rot{(one additional example may be found in supplemental section \S \ref{ssec:counterrot}).}
\rot{A detailed discussion of the numerical method may be found in supplemental section \S \ref{sec:supp_numerical}.}

\subsection{An analytically solvable test case - the \rot{isotropic} noisy Stuart-Landau oscillator}\label{ssec:clock}
We consider a solvable case with a trivial solution for the phase, here simply given by the angle of the common polar coordinates (the simple geometric phase). The system of stochastic differential equations is in this case given in polar coordinates by 
\begin{align}
\dot{\theta} &= \omega  + \sqrt{2D} \xi_1 (t),\quad
 \label{eq:isochronal}
\dot{r} = -\gamma r (r^2 - 1) + \sqrt{2D} \xi_2 (t).
\end{align}
Here $\xi_{1,2}(t)$ are independent Gaussian white noise sources with $\lr{\xi_{i}(t)\xi_{j}(t')}=\delta_{i,j}\delta(t-t')$. 
With $\gamma>0$ the noiseless system has a stable fixed point in the radius variable at $r=1$; with noise the radius fluctuates around this value and there are also fluctuations in the phase velocity (see \bi{clock-jump-results}a for a trajectory plotted in cartesian coordinates) but there is no coupling between phase and amplitude. We solve the \rot{MRT} equation numerically for a bounded annular region cutting out an inner circle and restricting the motion to a disk with a maximum radius.  
\begin{figure}[h!]
\centerline{
\includegraphics[width=\linewidth]{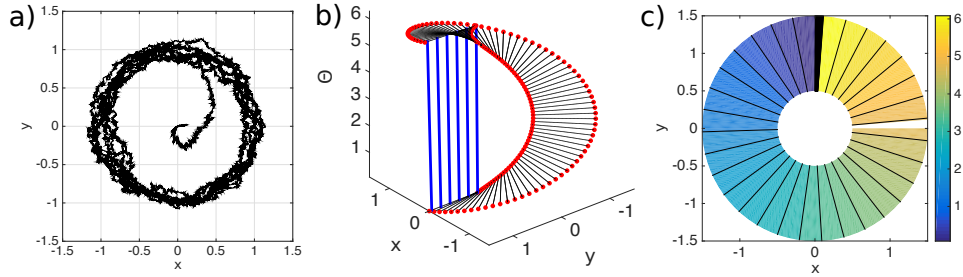}
}
 \caption{Isochronal Stuart-Landau oscillator clock model, \e{isochronal}, with $\omega=1$, $\gamma = 1$, and $D=0.01$. 
 Sample trajectory (with timestep $\Delta t=0.001$) with  initial condition at $(0,0)$ (a), \rot{MRT phase $\Theta(x,y)$} obtained by  a finite difference scheme (b) and its contour lines (c), which are the \rot{MRT} isochrons. As expected, the isochrons  are spokes-of-the-wheel and each ``spoke" is constant up to the order of $10^{-12}$. \rot{For the parameters, used $\overline{T}=2\pi$.}}

 \label{fig:clock-jump-results}
\end{figure}

If the drift term of the angular variable (in polar coordinates) does not depend on the radius, it is easy to show that  the \rot{MRT} function $T(\theta,r)$ is linear in $\theta$, \textit{i.e.}~
\be
T(\theta,r)=T_0-\frac{\tbar}{2\pi}\theta=T_0-\frac{\theta}{\omega}\rot{,\quad\text{therefore }\Theta(\theta,r)=\theta+\Theta_0}
\ee
(where $T_0$ \rot{and $\Theta_0$ are arbitrary constants}) is a solution of the PDE that satisfies the periodic-plus-jump condition:
the radial part of the backward operator yields immediately zero, the linear increase in $\theta$ results in the inhomogeneity on the right hand side.
The contour lines of this function\rot{, and hence also the MRT phase function $\Theta(x,y)$,} are simply the spokes of a wheel and this is also returned by our numerical routine, which solves the PDE numerically  (see \bi{clock-jump-results}, middle and right). These would be also the isochrons for the deterministic oscillator. \rot{Our numerical solution, obtained for  finite inner and outer boundaries, reproduces this trivial analytical solution, which is a first indication of the robustness of the solution method.}

\subsection{Stuart-Landau oscillator with antirotating  phase}
\label{ssec:antirot}
The next example is \rot{due to Newby and Schwemmer \cite{NewSch14} and displays an interesting} amplitude-phase coupling:
\begin{align}
\dot{\theta} &= \omega  \rot{+} \omega\gamma c(1-r)^2+ \sqrt{2D} \xi_1 (t),\quad  
\label{eq:antirot}
\dot{r} = -\gamma r (r^2 - 1) + \sqrt{2D} \xi_2 (t)
\end{align}
With parameters \rot{$\gamma>0$ and $c<0$}, the radius-dependent drift in the $\theta$ dynamics will cause an anti-rotation compared to the movement on the limit cycle at $r=1$ whenever the radius sufficiently deviates  from the limit cycle (both for smaller and larger values). The exact condition for this change in the deterministic force field (cf. also \bi{antirotate-results}a) is $(1-r)^2>\rot{(-\gamma c)^{-1}}$.
\begin{figure}[h!]
\centerline{
  \includegraphics[width=\linewidth]{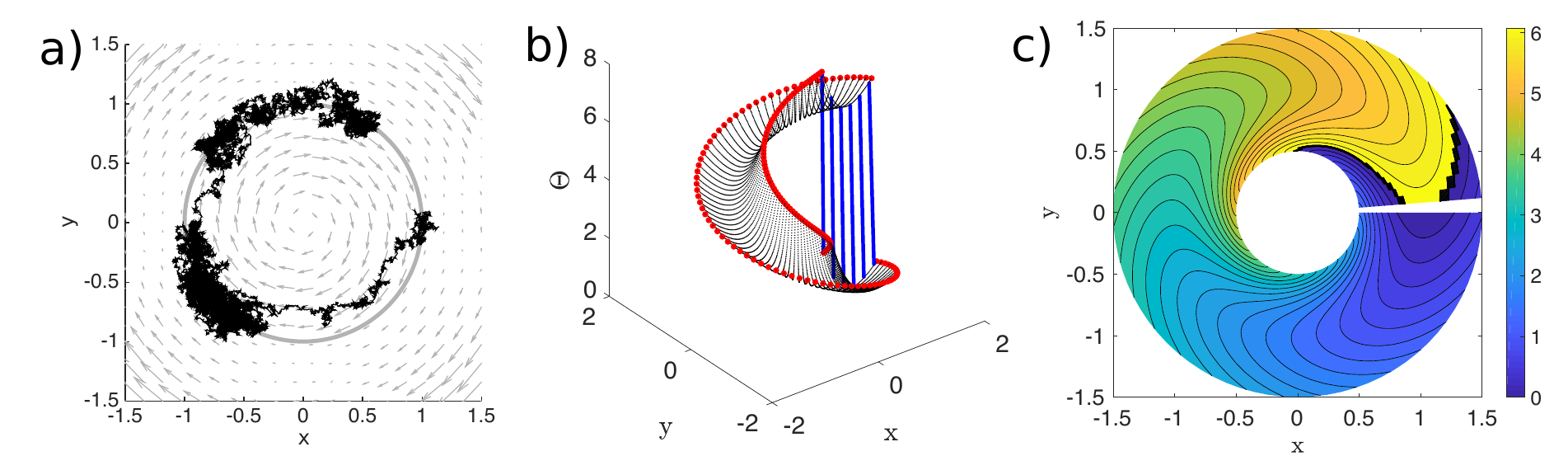} 
}
  \caption{Stuart-Landau oscillator with antirotating phase, \e{antirot}, with $\omega=1$, $\gamma = 15$, $\rot{c=-15}$, and $D=0.198$.
Sample trajectory  ($\Delta t=10^{-4}$) for an initial condition at $(1,0)$, limit cycle (grey bold line) and vector field shown by arrows (a), \rot{MRT phase $\Theta(x,y)$}  from the finite-difference-scheme solution (b) and its contour lines, \textit{i.e.}~the isochrons of the \rot{MRT} phase (c). \rot{For the parameters used, $\overline{T}=11.89$.} 
}
  \label{fig:antirotate-results}
\end{figure}
The shape of the isochrons reflects the reversed deterministic velocity away from the limit cycle. \rot{Both positions} 
inside and outside the limit cycle \rot{have mean angular velocity larger than, and in the same direction as, the mean angular rotation, while points near the limit cycle have mean angular velocity that is smaller and in the reverse direction.  Consequently,} the isochrons, resulting as contour lines from the PDE solution, attain a hook-like shape similar to what is observed for the deterministic system.
As  the noise becomes stronger, we expect that the isochrons of the \rot{MRT} phase become more straight because the trajectory diffuses faster in the radial direction, which makes all the initial points for the race around the circle more equal. Put differently, with more noise in the system, less head start is required for extreme radial positions to complete one round in the same mean time.

\subsection{Stuart-Landau oscillator with $y$-polarized noise}
\label{ssec:y-polarized-noise}
The previous examples shared a basic rotational symmetry: the dynamics in polar coordinates was independent of the geometric angle $\theta$. As a consequence, all the isochrons can be mapped onto each other by a simple rotation. 

\begin{figure}[h!]
\centerline{
\includegraphics[width=\linewidth]{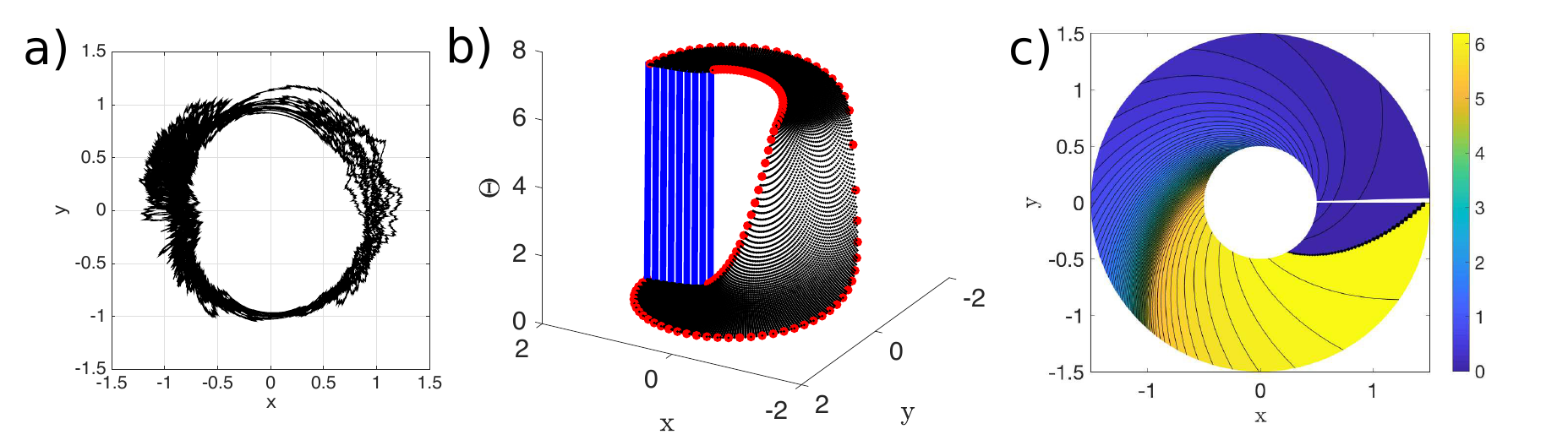} 
}
  \caption[Noisy Stuart-Landau oscillator with $y$-polarized noise \rot{MRT} and average isophases]{Noisy Stuart-Landau oscillator with $y$-polarized noise given by \e{y-noise} with $\omega=1.99$, $\kappa=1$, and $\sigma=0.2$. Example trajectory (a)   ($\Delta t=0.01$, trajectories generated in polar $(r,\theta)$ coordinates and then converted to Cartesian $(x,y)$ for analysis), \rot{MRT phase $\Theta(x,y)$}  (b) and its contour lines, \textit{i.e.}~isochrons of the \rot{MRT} phase (c). \rot{For the parameters used, $\overline{T}=20.93$.} 
  } 
  \label{fig:y-polar-SL-results}
\end{figure}

We consider now a case that lacks this symmetry, the Stuart-Landau oscillator with $y$-polarized noise\footnote{\spi~study this system using the equivalent Stratonovich formulation, compare equation (5) of \cite{SchPik13}, $\dot{\theta}=\omega+r\cos\theta-\kappa r^2+\sqrt{2D}\sin(\theta)\circ\xi(t)$, 
$\dot{r}=r(1-r^2)+\sqrt{2D} r\cos(\theta)\circ\xi(t)$.}:
\begin{equation}\label{eq:y-noise}
\begin{split}
\dot{\theta}&=f_1(\theta,r)+\sqrt{2D}\sin(\theta)\xi(t)\\
\dot{r}&=f_2(\theta,r)+\sqrt{2D} r \cos(\theta)\xi(t)\\
f_1(\theta,r)&=\omega+r\cos\theta-\kappa r^2+D\cos\theta\sin\theta\\
f_2(\theta,r)&=r(1-r^2)+rD\left(\cos^2\theta-\sin^2\theta \right)
\end{split}
\end{equation}
Note  that the $\theta$ drift is now also modified such that the angular velocity is sped up for $\theta \approx \pi$ but slowed down around $\theta \approx 0$.  In addition \eqref{eq:y-noise} is an example of an excitable system: in the absence of noise the dynamics have a stable fixed point rather than a limit cycle, and a circulating current appears only in the presence of noise. 

Note also that the same noise sample $\xi(t)$ drives both the $\theta$ and $r$ coordinate.  Thus in contrast to \eqref{eq:isochronal} and \eqref{eq:antirot} the noise is singular, \textit{i.e.}~the nondegeneracy condition $\det\mathcal{G}\not=0$ (eq.~\eqref{eq:nondegeneracy}) is violated. The  algorithm  nevertheless produces the \rot{MRT} phase  (Fig.~\ref{fig:y-polar-SL-results}).

The \rot{MRT} phase resulting from the PDE solution displays a rather heterogeneous structure. First of all, there is a clustering of isochrons on the left, where the angular velocity is systematically increased as explained above. Secondly, there is \rot{little} noise in the $\theta$ variable \rot{when} the trajectory is close to the $x$ axis; \rot{at the same time, there is little} noise in the radial dynamics close to the y-axis, because of the sine and cosine prefactors of the noise terms. 

Consequently, the deviation of the isochrons from the spokes-of-a-wheel shape changes significantly: on the right hand side, isochrons cover a larger range of the angular variable than on the left hand side, resulting from an interplay between the angle dependence of phase velocity and noise level.  

\begin{figure}[h!]
  \includegraphics[width=\linewidth]{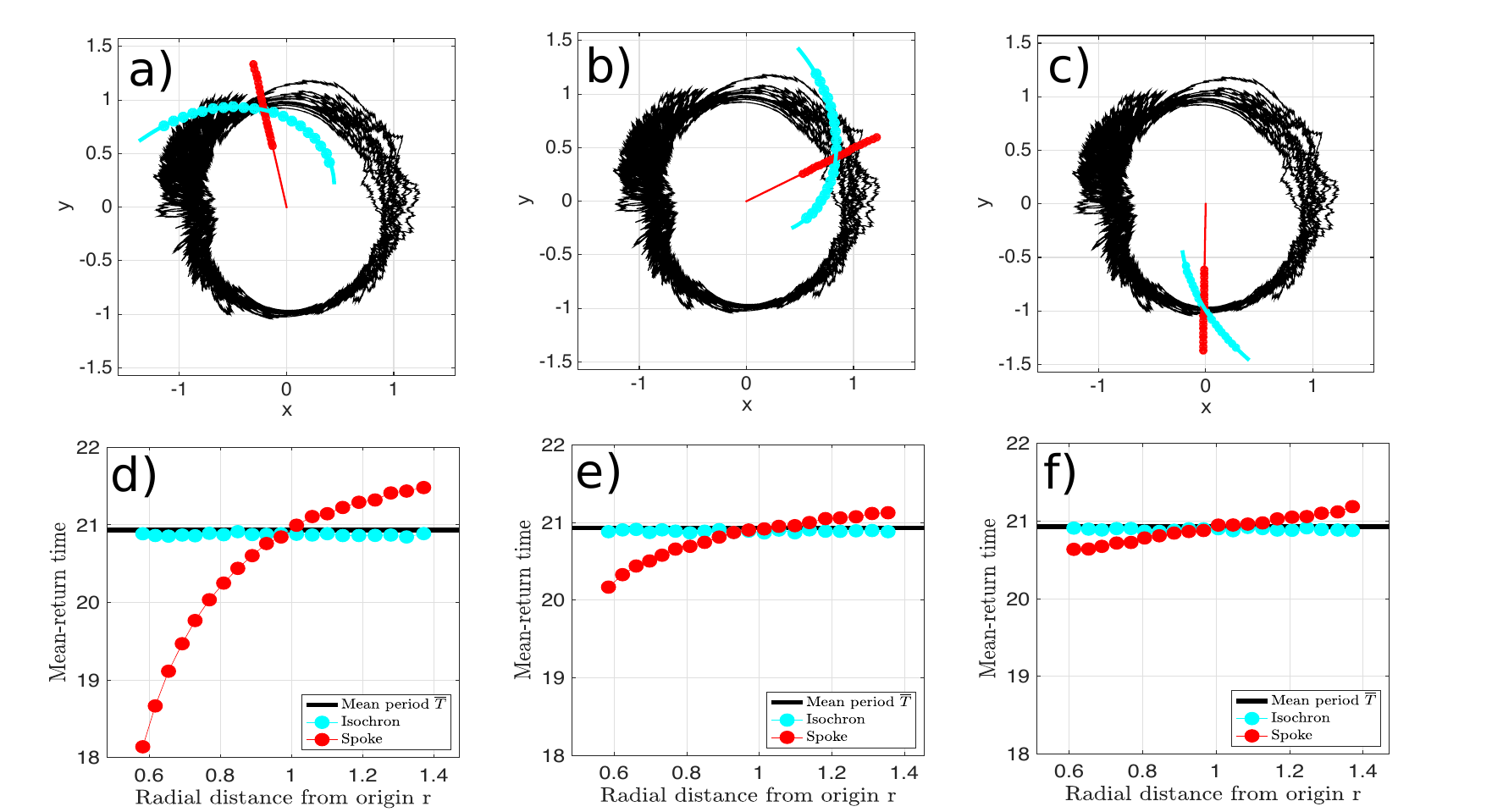} 
\caption{Testing the \rot{MRT} property for  the noisy Stuart-Landau oscillator with $y$-polarized noise \eqref{eq:y-noise}. Sample trajectory (a-c, black trace) and parameters are as in \bi{y-polar-SL-results}.  \rot{Cyan} curves (a-c) show isochronal sections obtained from the PDE solution; red curves are ``spoke" sections passing through the isochronal section. For each of 20 points per curve, equally spaced in the radial coordinate, we generate approximately 600,000 independent trajectories, using the Euler-Maruyama method (trajectories generated in polar $(r,\theta)$ coordinates with $\Delta t=0.001$) and computing the \rot{MRT} from a starting point to the same section after completing one rotation (d-f). For initial conditions on the PDE-derived isochrons, \rot{mean return} times (d-f, \rot{cyan dots}) are within 0.5\% of the mean period (d-f, \rot{black} horizontal line).  For initial conditions on a spoke, \rot{MRT}s deviate significantly from the mean period (d-f, \rot{red dots}).
Thus  the isochronal sections obtained as level curves of $T(x,y)$ solved with the periodic-plus-jump condition, is a \rot{MRT} isochron in the sense of \spi, whereas the spoke shows a strong increase of the mean return time with growing radius and is thus not such an isochron. }
\label{fig:MFRTs-compare}
\end{figure}

We take this most involved example also as a test case to check whether the algorithmic definition of the \rot{MRT} phase is matched by our PDE result (\bi{MFRTs-compare}). We pick out {three} contour lines of the PDE solution {(\bi{MFRTs-compare}a-c)}, distribute a number of initial points along this line (\rot{cyan} dots), and start an ensemble of trajectories on each of the  points (\bi{MFRTs-compare}b). We simulate one rotation until the \rot{respective} trajectory hits the back of the isochron; the corresponding mean return times are plotted in  \bi{MFRTs-compare}d-f and {reveal} a very good agreement for all initial points - the picked contour lines
\rot{satisfy} \spi's algorithmic definition of the \rot{MRT property}. For comparison we also show the same measurement {in each case} for another line, the spoke of a wheel passing through the same part of the noisy limit cycle (red {lines} in \bi{MFRTs-compare}a-c). Points on {these lines} have different mean return times to the same line, hence, the spoke is not an isochron of the \rot{MRT} phase {in any of the  cases considered}.

\subsection{Noisy heteroclinic oscillator} 
\label{ssec:heteroclinic}
Our last example is given by the stochastic differential equations 
\begin{align}
\begin{split}
\label{eq:hetero}
\dot{x} & = \cos(x) \sin(y) + \alpha \sin(2x) + \sqrt{2D} \xi_1 (t), \\
\dot{y} & = -\sin(x) \cos(y) + \alpha \sin(2y)+ \sqrt{2D} \xi_2 (t). 
\end{split}
\end{align}
This model has been used in Ref.~\cite{ThoLin14} to study another notion of phase, the asymptotic phase of a stochastic oscillator, and is an example of a noisy heteroclinic network (\cite{ArmbrusterStoneKirk:2003:Chaos,Bakhtin2010ProbThyRelatFields,StoArm99,StoneHolmes1990SIAMJApplMath}, see also \cite{HirschSmaleDevaney2004}). The model's deterministic dynamics ($D=0$) is characterized by four saddle points in the corners $(\pm \pi/2,\pm \pi/2)$
and an unstable focus at $(0,0)$. Without noise the system approaches the heteroclinic cycle connecting the saddles and the system does not sustain a finite period oscillation (the heteroclinic cycle has an infinite period). With a weak noise, the system displays pronounced oscillations of a finite phase coherence \cite{ThoLin14} and each of the components has pronounced peaks in its power spectrum \cite{BalTho17}.
In the plane these oscillations become manifest by a clockwise rotation around the origin (\bi{het_jump_results}a).

Here we solve \e{this-is-the-equation-we-actually-solve-numerically} on the domain $(-\pi/2,\pi/2)\times (-\pi/2,\pi/2)$ and cut out a small square in the middle. On these outer and inner boundaries reflecting boundary conditions are imposed while along the blue cut line in \bi{het-finite-diff}a,  we apply the jump condition. The resulting  mean return time (as a function of the starting position $(x,y)$) is shown in  \bi{het_jump_results}, middle and the isoclines, which are the isochrons of the \rot{MRT} phase, are displayed in \bi{het_jump_results}, right.

\begin{figure}[h!]
  \includegraphics[width=\linewidth]{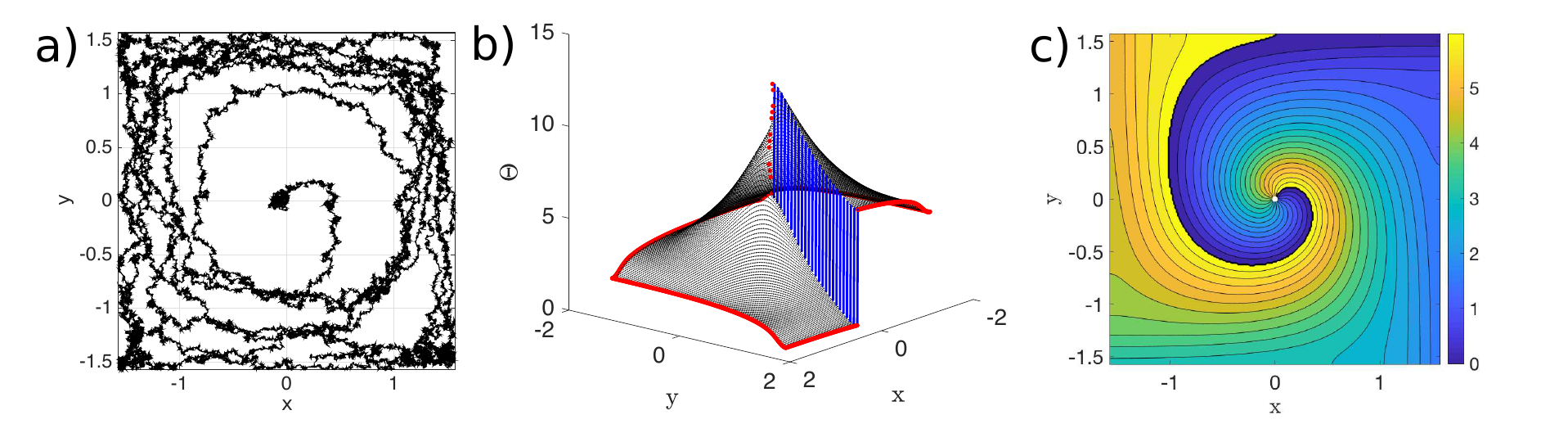} 
\caption[Noisy heteroclinic oscillator \rot{MRT} and average isophases]{Noisy heteroclinic oscillator, \e{hetero}, with $\alpha=0.1$, and $D=0.01125$.  Sample trajectory  ($\Delta t=0.001$)  with initial condition at $(0,0)$ that  moves clockwise (a). In the absence of noise, the system has a stable heteroclinic orbit with infinite period. With noise ($D>0$), the small perturbations eventually knock the trajectory out of the corners to form a stochastic oscillator with finite mean period. \rot{MRT phase $\Theta(x,y)$} finite-difference-scheme solution (b) and its contour lines, \textit{i.e.}~the isochrons of the \rot{MRT} phase (c). The computational grid is $251\times 251$ and the punctured square has side length of 0.05.
\rot{For the parameters used, $\overline{T}=16.23$.}
}
\label{fig:het_jump_results}
\end{figure}

The \rot{MRT} isochrons wind inward around the origin in a counterclockwise direction. In terms of the mean return time this geometry has a simple interpretation: we have to start at an advanced position if we start at the slow track on the outside (close to the square's sides), where the speed is reduced, compared to a starting position on the inside (close to the central square), where the speed is larger.   

\begin{figure}[htbp] 
   \centering
   \includegraphics[width=0.8\linewidth]{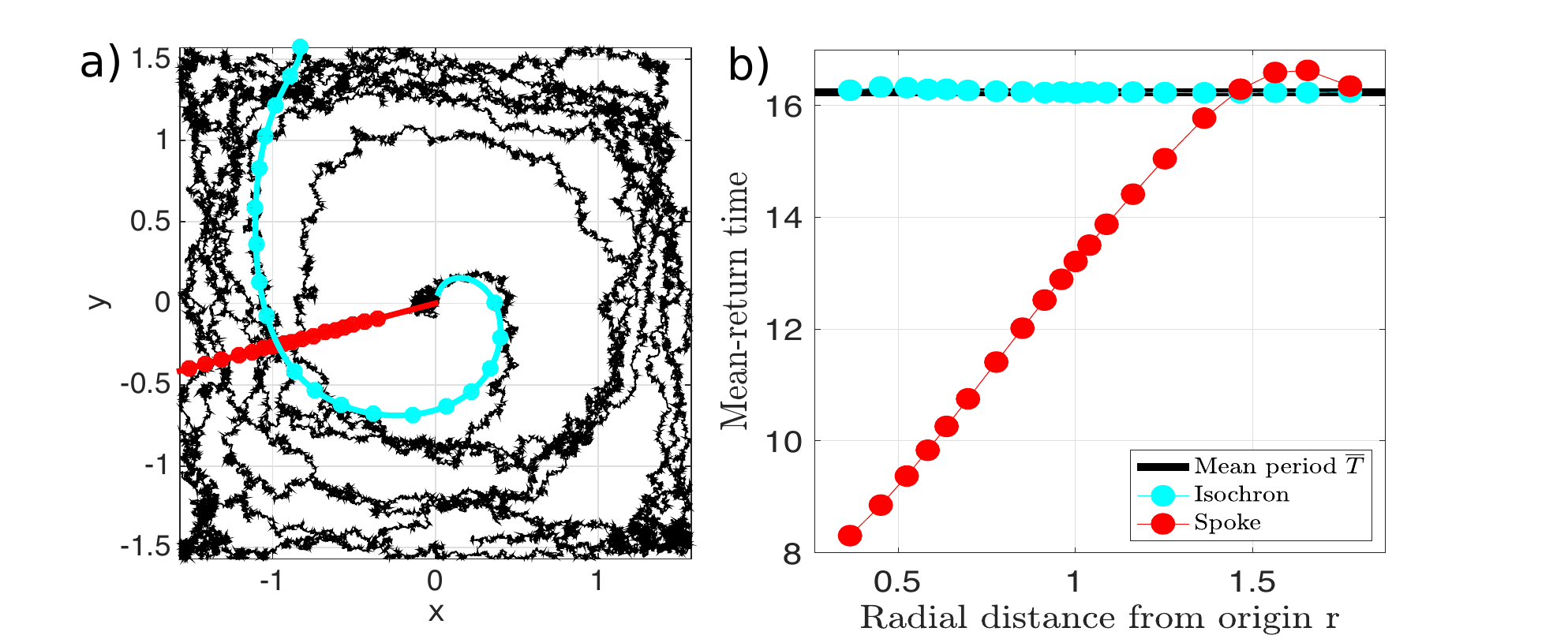}  
   \caption{(a) Heteroclinic trajectory in black for $D=0.01125$ and $\alpha=0.1$ with initial condition at origin. The \rot{cyan} curve is an isophase pulled from the  contour plot in \bi{het_jump_results}c. The \rot{cyan dots}  indicate $20$ equally spaced initial conditions along this curve. The red curve is a ``spoke" running through the tenth isophase initial condition. The red \rot{dots} indicate $20$ equally spaced initial conditions. (b) The \rot{black} line is the mean period calculated from the stationary flux (used in the finite difference scheme). \rot{Cyan dots} are the \rot{MRT}s back to the isophase after completing one full oscillation of the isophase initial conditions, from an ensemble of approximately 520,000 trajectories generated using the Euler-Maruyama method  with $\Delta t=0.001$.  The $x$-axis is the radial distance from the origin. \rot{Red dots}  are the \rot{MRT}s back to the spoke after completing one full \rot{rotation} from the spoke initial conditions.    }
   \label{fig:heteroclinic-compare}
\end{figure}

As in the case of the Stuart-Landau oscillator with $y$-polarized noise, we can test that the level curves of the solution of \e{this-is-the-equation-we-actually-solve-numerically} have the \rot{MRT} property.  \bi{heteroclinic-compare} contrasts the \rot{mean return} times for ensembles of trajectories with initial conditions located either along a level curve of \rot{$\Theta$, the MRT phase}, \textit{vs.}~along a simple radial section.   Using an ensemble of independent trajectories for each initial condition, the \rot{mean return} times for the spoke vary by a factor of two as the starting distance from the origin ranges from 0.5 to 1.5.  In contrast, the \rot{MRTs} for the isophase fall within $0.7\%$ of the mean period regardless of starting radius, illustrating \spi's \rot{MRT} criterion for an isochron.

\section{Extension to $n$-dimensional oscillators}
\label{sec:ndim}
For continuity of presentation and ease of illustration, we gave our main theorem \ref{thm:main} for an oscillator in two dimensions.  But the proof based on McLean's Theorem 4.10 carries over to oscillators in arbitrary finite dimension with little modification, so it is worth stating the general case here.  As before, we assume that our noisy oscillator has been mapped through a coordinate transformation to a system with one periodic coordinate $\alpha\in S^1\equiv [0,2\pi]$ and the remaining $n-1$ coordinates lying within \rot{a simply connected compact subset of} 
$\R^{n-1}$, \rot{with piecewise smooth boundary; call this set $B_1^{n-1}\subset\R^{n-1}$.} 
Thus the 
region is assumed to have the form of an $(n-1)$-dimensional
\rot{cylinder} 
embedded in $\R^n$.  
We define the local domain $\Omega=S^1\times B_1^{n-1}$ and the extended domain $\Omega_\text{ext}=\R\times B_1^{n-1}$. The driving noise vector $\mathbf{dW}$ will have $K\ge n$ components $dW_1,\ldots,dW_K$.  Recapitulating assumptions A1-A5 in this context, we assume the following:
\begin{enumerate}
\item[A1'.] Transformed into $(\alpha,\vec{\beta})$ coordinates, the trajectories $(\alpha(t),\vec{\beta}(t))$ of a strongly Markovian time-homogeneous process obey an Ito equation
\begin{equation}
\begin{split}
d\alpha &= f_1(\alpha,\vec{\beta})\,dt + \sum_{k=1}^K g_{1k}(\alpha,\vec{\beta})\,dW_k(t)\\
d\beta_i &= f_i(\alpha,\vec{\beta})\,dt + \sum_{k=1}^Kg_{ik}(\alpha,\vec{\beta})\,dW_k(t),\text{ for }2\le i \le n,
\end{split}
\end{equation}
where $f_i, g_{ik}$ are $C^2$ on $\Omega_\text{ext}$ for $1\le i\le n$ and $1\le k \le K$.\\
As before, we will refer to the $n\times n$ matrix $\mathcal{G}=gg^\intercal$.
\item[A2'.] The functions $f_i$ and $g_{ik}$ are periodic in the first coordinate with period $2\pi$, \textit{i.e.}~$\forall \alpha\in\R$ and $i=1,2$, $f_i(\alpha+2\pi,\vec\beta)=f_i(\alpha,\vec\beta)$, and likewise for each $g_{ik}$.
\item[A3'.] The second order differential operator $\mathcal{P}$ is strongly elliptic, where $\mathcal{P}$ is defined 
\begin{equation}\label{eq:defineP_nD}
\mathcal{P}u=-\sum_{i=1}^n\sum_{j=1}^n\partial_i(A_{ij}\partial_j u)+\sum_{i=1}^nA_i\partial_i u \text{ on }\Omega_\text{ext}
\end{equation}
and $A_{ij}=-\frac{1}{2}\mathcal{G}_{ij}$ and $A_i=-\frac12\sum_{j=1}^2\partial_j\mathcal{G}_{ij}+f_i$.  
At the boundary $|\vec\beta|=1$ we impose Neumann boundary conditions 
\begin{align}\label{eq:bc-adjoint-reflect-nD}
0=\sum_{i=1}^2\nu_i\sum_{j=1}^2 \mathcal{G}_{ij}\partial_j u
\end{align}
where $\nu$ is the outward unit normal at the boundary.
\item[A4'.] The process viewed on $\Omega$ (taking $\alpha$ mod $2\pi$) admits a density $\rho(\alpha,\vec\beta,t)$ evolving according to 
\begin{equation}\label{eq:adjoint-pde-nD}
\frac{\partial\rho}{\partial t}=\mathcal{L}\rho=\mathcal{P}^*\rho=-\sum_{i=1}^n\sum_{j=1}^n\partial_i(A_{ji}^*\partial_j \rho)-\sum_{i=1}^2\partial_i(A_i^*\rho)
\end{equation}
where $A_{ji}^*=A_{ij}$ and $A_i^*=A_i$, and we impose reflecting (Neumann) boundary conditions 
\begin{equation}\label{eq:adjoint-reflecting-bc-nD}
0=\sum_{j=1}^2A_{ji}^*\partial_j\rho+A_i^*\rho
\end{equation}
at $|\vec\beta|=1$ (for all $\alpha$), and periodic boundaries in $\alpha$, i.e.
\begin{equation}\label{eq:adjoint-periodic-bc-nD}
\forall \vec\beta\in B_1^{n-1},\quad \rho(0,\vec\beta)=\rho(2\pi,\vec\beta).\end{equation}
We assume that the system has a unique stationary distribution $\rhoss\ge 0$, with $1=\int_\Omega\,d\alpha\,d\vec\beta\rhoss(\alpha,\vec\beta)$, \textit{i.e.}~satisfying the homogeneous equation
\begin{equation}\label{eq:adjointhomog-nD}
\mathcal{P}^*\rhoss=0,
\end{equation}
together with the boundary conditions \eqref{eq:adjoint-reflecting-bc}-\eqref{eq:adjoint-periodic-bc}.
\end{enumerate}
As before, the stationary flux vector $J_\text{ss}(\alpha,\vec\beta)$ corresponds componentwise to 
\begin{equation}
J_{\text{ss},i}(\alpha,\vec\beta)=-\frac12\sum_{j=1}^n\partial_j\left(\mathcal{G}_{ij}\rhoss\right)+f_i\rhoss.
\end{equation}
\begin{enumerate}
\item[A5'.] We assume the mean drift is nonzero and oriented in the direction of increasing $\alpha$.  That is, if $\gamma:B_1^{n-1}\to[0,2\pi]$ is any $C^1$ function whose graph $C_\gamma=\{(\alpha=\gamma(\vec\beta),\vec\beta)\; : \; \vec\beta\in B_1^{n-1}\}$ cuts transversely through the extended domain, separating $\Omega_\text{ext}$ into  left and  right connected components, with unit normal $\mathbf{n}(\vec\beta)$ oriented into the downstream connected component, then the mean rightward flux through $C_\gamma$ is positive, \textit{i.e.}
\begin{equation}\label{eq:meanJ-nD}
0<\overline{J}:=\int_{-1}^1 d\vec\beta\,\mathbf{n}^\intercal(\vec\beta) J_\text{ss}(\gamma(\vec\beta),\vec\beta).
\end{equation}
\end{enumerate}
These assumptions suffice to establish the existence and uniqueness (up to an additive constant) of a \rot{MRT} function $T(\alpha,\vec\beta)$ satisfying $\mathcal{L}^\dagger[T]=-1$ subject to adjoint-reflecting boundary conditions $\sum_{i=1}^n\nu_i(\alpha,\vec\beta)\mathcal{G}_{ij}\partial_jT=0$ \rot{at the boundary $\partial \Omega$,} and jump-periodic boundary conditions $\forall \vec\beta\in B_1^{n-1}$, $T(\alpha,\vec\beta)=T(\alpha+2\pi,\vec\beta)+\tbar$, where $\tbar$ is the mean period of the oscillator, $\tbar=\left(\overline{J}\right)^{-1}$.  As in the planar case, the proof would \rot{involve} the auxiliary function $U(\alpha,\vec\beta)$ satisfying \begin{equation}
\label{eq:inhomogU-nD}
\begin{split}
\mathcal{P}U=-1-f_1(\alpha,\vec\beta)\frac{\tbar}{2\pi},&\quad\text{ on }\Omega\\ 
\sum_{i=2}^n\nu_i\sum_{j=1}^n \mathcal{G}_{ij}\partial_j U(\alpha,\pm 1)=\mathcal{G}_{i1}(\alpha,\vec\beta)\frac{\tbar}{2\pi},&\quad\forall\alpha\in[0,2\pi]\text{ and }\rot{\vec\beta\in\partial B_1^{n-1}}\\
U(0,\vec\beta)-U(2\pi,\vec\beta)=0,&\quad\forall\vec\beta\in B_1^{n-1},
\end{split}
\end{equation}
with $\nu$ the outward unit normal vector \rot{at the boundary of $\Omega$}. 

\section{Summary and conclusions}
\label{sec:conclusions}
In this paper we have found an analytic way to define the mean--return-time (\rot{MRT}) phase, originally proposed by Schwabedal and Pikovsky in terms of an algorithm, for the important class of smooth two-dimensional stochastic oscillators that are driven by white Gaussian noise.
We showed that the defining isochrons are given as the contour lines of the solution of the conventional PDE for the mean-first-passage-time function but with an uncommon periodic-plus-jump  condition. We illustrated this construction in a number of stochastic oscillator models and verified the algorithmic \rot{MRT}-property of our PDE solution for the most involved {examples} (Stuart-Landau oscillator with y-polarized noise {and heteroclinic oscillator}). Some open questions remain.

It would be of interest to study the effect of increasing levels of noise on the shape of the isochrons. Preliminary results indicate that in many cases the shape of the isochrons change towards spokes of a wheel with increasing noise level, \textit{i.e.}~the isochrons become less curved. {A similar phenomenon was observed in a version of the asymptotic phase for stochastic oscillators that we introduced previously, based on a spectral decomposition of the generator \cite{ThoLin14}.} However, it is unclear what are the exact conditions for the stochastic oscillator (\textit{i.e.}~for the functions appearing in the Langevin \e{langevin}) {leading to isochrons of greater or lesser  curvature}.
Also, in cases in which no deterministic phase exists as for the heteroclinic oscillator, one should also consider the opposite limit and let the noise level shrink. 
  
Finally, one should compare systematically to the asymptotic phase proposed in \cite{ThoLin14}. For the trivial case of the isotropic Stuart-Landau oscillator that lacks an angle-amplitude coupling, both definitions yield the same stochastic phase which is the same as in the deterministic case \cite{PhysRevLett.115.069402}. Generally, in cases 
with an existing deterministic phase, both definitions of a stochastic phase yield this deterministic phase in the limit of vanishing noise and hence can be regarded as possible generalizations of the deterministic phase to the stochastic case.  For a finite noise level, however, there seem to be small differences between the asymptotic phase and the \rot{MRT} phase in most systems. The exact nature of these differences and their role in reduced descriptions of stochastic oscillators remain exciting topics of future research.

 \section{Acknowledgments}
  \rot{The authors thank J.~Dobrosotskaya, W.~McLean, and D.~Gurarie for helpful discussions. PT acknowledges support from NSF grant DMS-1413770, from Oberlin College,
  and from the Mathematical Biosciences Institute and the NSF under grant DMS-1440386.}
 Large-scale Monte Carlo simulations made use of the High Performance Computing Resource in the Core Facility for Advanced Research Computing at Case Western Reserve University.

\bibliographystyle{siamplain}
\bibliography{ALL_19_03_11,references,math}

\newpage

\begin{center}
SUPPLEMENTARY MATERIALS
\end{center}

\section{Numerical Method for Obtaining the MRT Phase}
\label{sec:supp_numerical}

In this section we describe in detail how we incorporate the jump condition into a finite difference numerical scheme to obtain the mean--return-time function $T(x,y)$ for the heteroclinic oscillator example of \S\ref{ssec:heteroclinic}, and we briefly investigate the robustness of the isochrons to changes in the inner domain boundary size and specification of the mean period $\tbar$.  

\subsection{Finite Difference Method}
\label{ssec:supp_finite_diff}

We solve equation \eqref{eq:this-is-the-equation-we-actually-solve-numerically}: 
$$
{\cal L}^\dagger T(x,y)=-1\;\; \mbox{with} \;\;  \left. {\cal R}_{T} T(x,y)\right|_{(x,y)\in R_\pm}=0, \;\liml_{\varepsilon \to 0^+} \left(T(-\varepsilon,y)-T(\varepsilon,y)\right)=\tbar,
$$
with adjoint reflecting boundary conditions, i.e.
$${\cal R}_{T}u= \sum_{j=1,2}n_j\sum_{k=1,2} \mathcal{G}_{jk}\partial_k u =0$$
along the inner and outer boundary of a square domain 
$$\mathcal{D}=\left([-\pi/2,\pi/2]\times[-\pi/2,\pi/2]\right)\backslash\left([-\epsilon/2,\epsilon/2]\times[-\epsilon/2,\epsilon/2]\right),$$
with a jump of $\tbar$ implemented along a vertical cut at $x=0$ running from $y=\epsilon/2$ to $y=\pi/2$, see  \bi{het-finite-diff}A.
\begin{figure}[htbp] 
   \centering
   \includegraphics[width=\linewidth]{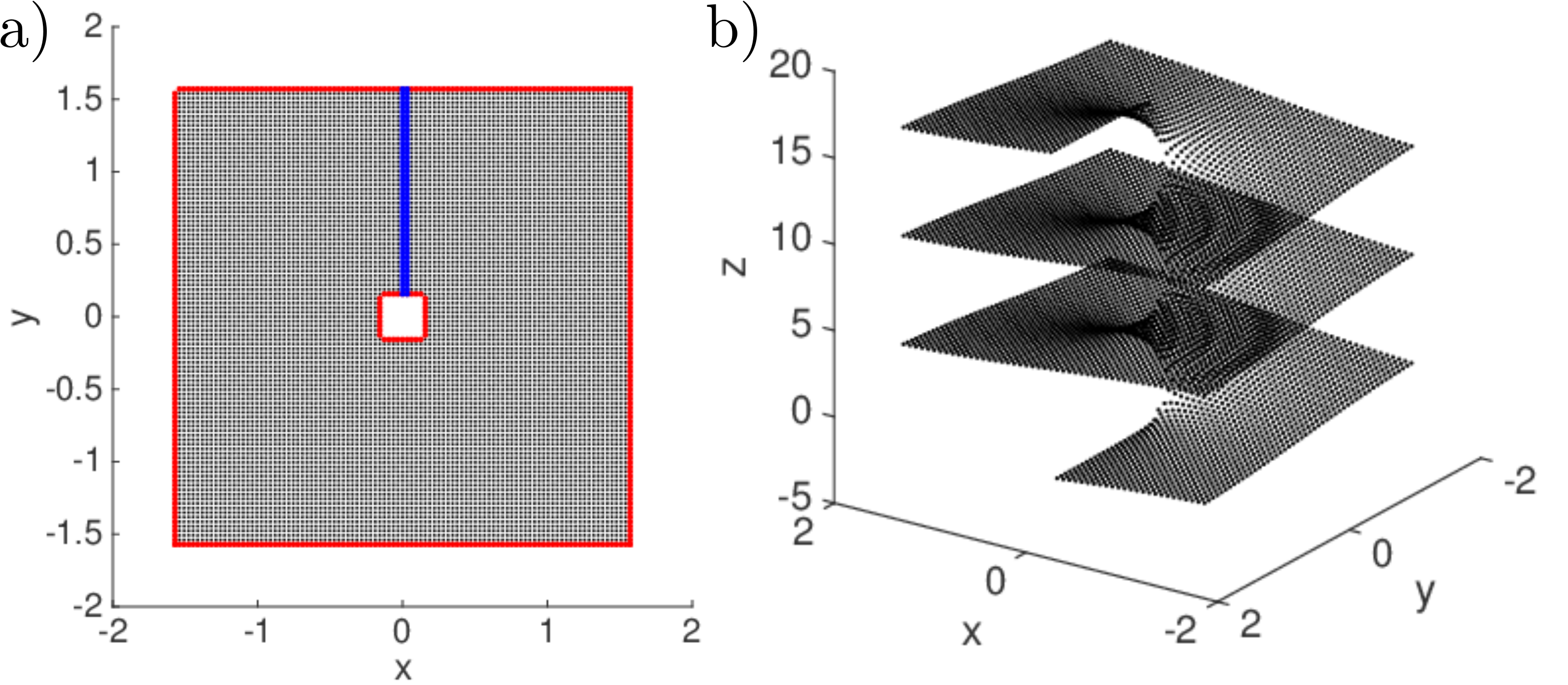} 
   \caption{Numerical scheme (heteroclinic oscillator example). 
   (a) Finite difference grid for the heteroclinic oscillator.  Interior points (black grid) use standard second-order finite difference operators to approximate first and second derivatives. Adjoint reflecting boundary conditions are imposed at the inner and outer boundaries (red points). The size of the inner boundary is exaggerated for clarity.  The jump condition is imposed along a vertical cut (blue points).  The computational domain  for \bi{het_jump_results} used a $251\times 251$ grid with spacing $\Delta x=\Delta y=\pi/250$ and inner square side length $\epsilon=2\pi/125\approx0.05$.     
   (b) A ``time crystal", a la Winfree \cite{Winfree1980}, providing a didactic illustration of the unwrapping of the domain to form a multi-valued surface,  analogous to the unwrapped domain introduced in \S\ref{ssec:unwrapped} of the main text.
   Coordinates $x$ and $y$ correspond to $(x,y)$ coordinates in panel A; coordinate $z$ corresponds to the (multivalued) geometric phase $z(x,y)=\tan^{-1}(y/x)$.  The jump condition eliminates the necessity of considering multiple copies of the domain.
   }
   \label{fig:het-finite-diff}
\end{figure}

We employed a standard finite difference scheme to solve the MFPT-PDE with the mean period jump condition. The domain was uniformly discretized and the points within the punctured center removed. All derivatives were approximated using the 2nd order, centered finite differences at interior points of the domain (black grid points in \bi{het-finite-diff}A), excluding the cut line (blue points in \bi{het-finite-diff}A):
\begin{align}
\label{eq:supp_diff1}
\left(\partial_x T \right)_{i,j} &\to \frac{T_{i+1, j} - T_{i-1, j}}{2h} \\
\label{eq:supp_diff2}
\left(\partial^2_{xx} T \right)_{i,j} &\to \frac{T_{i+1, j} - 2T_{i,j} + T_{i-1, j}}{h^2} \\
\label{eq:supp_diff3}
\left(\partial^2_{xy} T \right)_{i,j} &\to \frac{T_{i+1, j+1} - T_{i+1,j-1} - T_{i-1, j+1} + T_{i-1, j-1}}{4h^2}
\end{align}
and similarly for the $y$ derivatives. 

At inner and outer boundary points (red points in \bi{het-finite-diff}A) the adjoint reflecting boundary conditions were implemented by introducing ``ghost points" one grid space beyond the edge of the domain.  The normal vector $(n_1,n_2)^\intercal$ was taken to be $(\pm 1,0)^\intercal$ or $(0,\pm 1)^\intercal$ respectively, as dictated by the geometry.  

We implemented the mean period jump condition along a line segment connecting the outer and inner boundaries (blue dots in \bi{het-finite-diff}A). For the heteroclinic oscillator implementation, the cut was made along the vertical line segment from $(x=0, y=\epsilon/2)$ to $(x=0, y=\pi/2)$. Denote the set of finite difference points on this line segment by writing $(i_0,j) \in \ell$. Then, for $(i_0,j)$ we implement the jump in $T$ between the point $(i_0,j)$ and the immediately adjacent point to the left, which we denote $(i_0+1,j)$. 
That is, we replace $T_{i_0+1,j}$ with $\left(\tbar+T_{i_0+1,j}\right)$.  For these points we thus replace \eqref{eq:supp_diff1}-\eqref{eq:supp_diff2} with
\begin{align}
\label{eq:supp_diff1_jump}
\frac{T_{i_0+1, j} - T_{i_0-1, j}}{2h} &\to \frac{\left(\tbar+T_{i_0+1, j}\right) - T_{i_0-1, j}}{2h} \\
\label{eq:supp_diff2_jump}
\frac{T_{i_0+1, j} - 2T_{i_0,j} + T_{i_0-1, j}}{h^2} &= \frac{\left(\tbar +T_{i_0+1, j} \right) - 2T_{i_0,j} + T_{i_0-1, j}}{h^2}. 
\end{align}

A parallel procedure of subtracting $\overline{T}$ from $T_{i_0, j}$ is used for derivatives centered at location $(i_0+1, j)$, whenever location $(i_0,j) \in \ell$.

The resulting system of equations is linear-affine, with all constants involving $\overline{T}$ written on the right-hand side.

As a technical point, the system is rank-deficient, leading to nonuniqueness of solutions.  This point is readily addressed by specifying the value of $T$ at a particular location $(i_*,j_*)$. We prescribed the value of the northwest corner when solving for $T$ for the heteroclinic oscillator:
\begin{align}
T\left( - \frac{\pi}{2}, \frac{\pi}{2} \right) = T_{0,0} = 100
\end{align}
Fixing $T$ at this point removes one unknown from the linear system, and the resulting reduced-dimensional system is uniquely solvable with standard linear algebraic methods.

The examples in \S\ref{ssec:clock}, \S\ref{ssec:antirot}, \S\ref{ssec:y-polarized-noise}  and \S\ref{ssec:counterrot} were implemented using standard polar coordinates rather than  Cartesian coordinates.   
All examples in the main text used a $251\times 251$ discretization of the computational domain.  
Code to reproduce the examples is available at 
\texttt{https://github.com/pjthomas9/isophase}.

In each of these examples, the annular computational domain contains the overwhelming bulk of the stationary probability distribution.  As observed by \spi, the definition of the MRT phase presumes that the oscillator trajectories avoid the region near the ``center" of the domain.  The inner boundary should be made small enough that the excised region at the center contains very little net probability.  Similarly, if the underlying model has an unbounded domain, then the outer boundary should be large enough that the region excluded from the computational domain contains very little net probability.

\subsection{Robustness of the Method}

\subsubsection{Tolerance to the value of $\tbar$}

Although we did not systematically investigate parametric robustness of the method in detail, our heuristic observations suggest the method is robust against some variation in the $\tbar$ parameter.  That is, we have observed that if the parameter representing $\tbar$ varies by up to 5\% from the true value of $\tbar$, the method nevertheless produces a reasonably smooth solution that is quantitatively close to the solution produced with the correct value of $\tbar$.

\subsubsection{Sensitivity to inner domain size}

In order to test the sensitivity of the method for the heteroclinic oscillator system to changes in the size of the inner domain boundary, we ran the numerical algorithm using $\epsilon\in\{0.05,0.1,0.2\}$ on a $201\times 201$ finite difference map, with the other parameters the same as given in \S\ref{ssec:heteroclinic}.
\bi{boundary_sensitivity} compares the MRT function $T(x,y)$ obtained when the interior boundary region $[-\epsilon/2,\epsilon/2]\times[-\epsilon/2,\epsilon/2]$ is imposed with different values of $\epsilon$.  Both the absolute difference in the value of $T$ (panels A-B) and the shape of the isochrons (panels C-D) show modest sensitivity to $\epsilon$ restricted to a small (roughly $O(\epsilon)$ in size) neighborhood of the inner domain boundary.  Outside a small neighborhood the values of $T$ and the shape of the isochrons are insensitive to the precise value of $\epsilon$ dictating the inner boundary size.
\begin{figure}[htbp] 
   \centering
   \includegraphics[width=\linewidth]{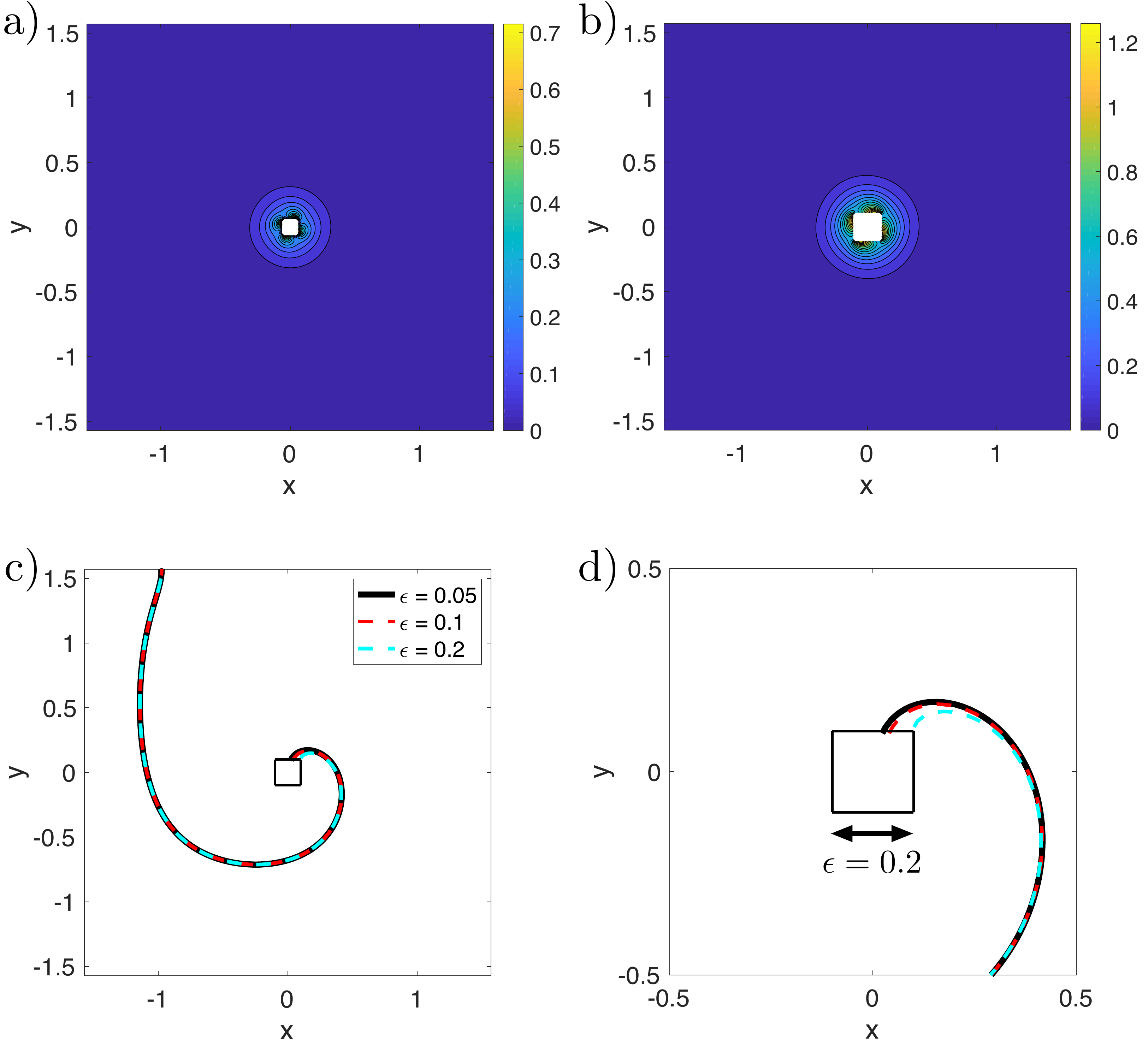} 
   \caption{Sensitivity of the MRT function $T$ to the size of the interior domain boundary $\epsilon$.  (a) Absolute difference $|T(x,y,\epsilon_1)-T(x,y,\epsilon_2)|$ at each point in grid for $\epsilon_1=0.05$ and $\epsilon_2=0.1$.  The inner square has side    length 0.1.   (b) Absolute difference $|T(x,y,\epsilon_1)-T(x,y,\epsilon_2)|$ for $\epsilon_1=0.05$ and $\epsilon_2=0.2$. The inner square has side length 0.2.  Mean period $\tbar=16.23$.  The relative shift in the time function due to changing the inner boundary size is less than one part in a thousand throughout most of the region.  Immediately adjacent to the inner boundary, relative deviations occur of up to 8\% of $\tbar$. (c) Comparison of a single MRT isochron (chosen to have a common value of $T$ at the outer boundary) for $\epsilon=0.05$ (solid black curve), $\epsilon=0.1$ (red dashed curve), and $\epsilon=0.2$ (cyan dashed curve).   
   The inner square has side length 0.2. (d) As in panel C, magnified to show detail.  The isochrons obtained with different interior domain sizes $\epsilon$ differ appreciably only in a neighborhood of size $O(\epsilon)$.}
   \label{fig:boundary_sensitivity}
\end{figure}

\section{A Further Example: Stuart-Landau oscillator with counterrotating  phase}
\label{ssec:counterrot}
We discuss one additional nontrivial example which was suggested by \rot{Newby and Schwemmer}, a Stuart-Landau oscillator with a true amplitude-phase dependence \rot{\cite{NewSch14}}. In polar coordinates, the model is given by
\begin{align}
\dot{\theta} &= \omega  +\gamma c(1-r^2)+ \sqrt{2D} \xi_1 (t),\quad 
\label{eq:counterrot}
\dot{r} = -\gamma r (r^2 - 1) + \sqrt{2D} \xi_2 (t).
\end{align}
On and inside the deterministic limit cycle ($r\le 1$), the deterministic field still induces on average a counterclockwise (mathematically positive) rotation, while for points sufficiently outside the limit cycle ($r^2>1+\omega/(\gamma c)$) the rotation is clockwise (cf. \bi{counterrotate-results}, left). The stochastic trajectory goes indeed both ways, depending on the value of the radius.
\begin{figure}[h!]
\centerline{
\includegraphics[width=\linewidth]{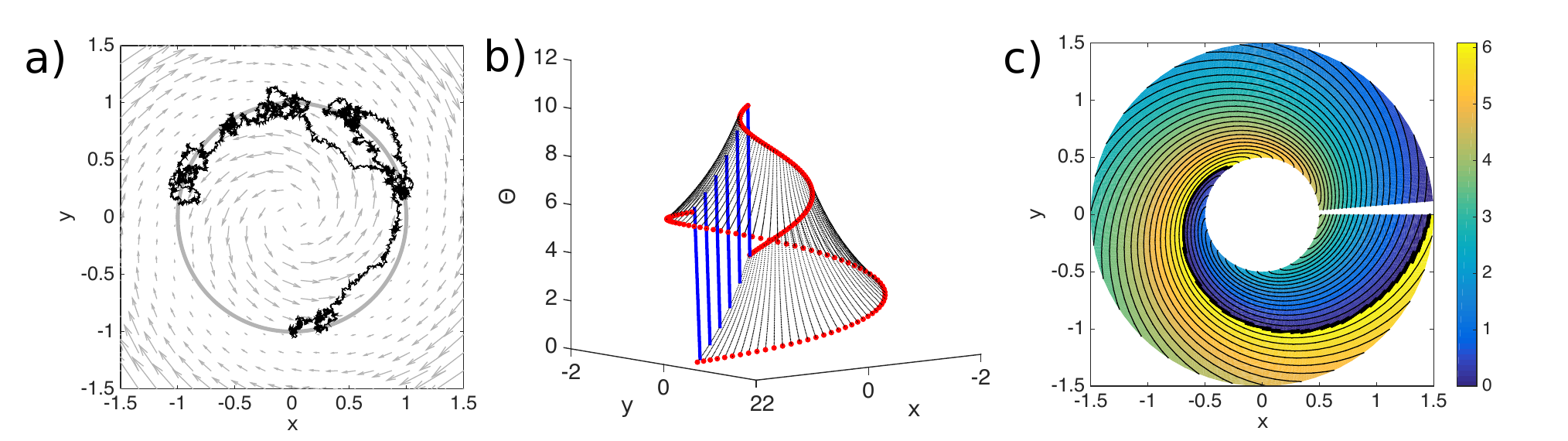} 
}
  \caption{Stuart-Landau oscillator with counterrotating phase,  \e{counterrot}, with $\omega=1$, $\gamma = 15$, $c=4$, and $D=0.18$. Sample trajectory ($\Delta t=10^{-4}$) with initial condition at $(0,-1)$ with the deterministic limit cycle shown by the bold grey line and the deterministic velocity field (a) [note that the field rotates clockwise for sufficiently large radius, which is the mentioned counterrotation], \rot{MRT phase $\Theta(x,y)$}  from the  finite-difference-scheme 
  solution (b) and its contour lines, \textit{i.e.}~the isochrons of the \rot{MRT} phase (c).   
\rot{For the parameters used, $\overline{T}=6.284$.}
}
  \label{fig:counterrotate-results}
\end{figure}

All the isochrons, here computed as the contour lines of the PDE solution, spiral inward, which can be understood as follows. Points with large radius have to get a head start because they go for a while in the wrong direction and need some time to turn (by a mean-driven or noise-induced reduction of the radius variable). Points on the inside of the circle, have a considerably higher rotation speed and have to be started with a certain delay,
at an earlier geometric phase. Of course, these arguments also apply to the deterministic system and we see that noise sources with moderate intensity do not change this picture qualitatively.

\section{Ito versus Stratonovich interpretation}
\label{supp:Stratonovich}

For completeness, we provide here the form of the PDE governing the mean--return-time function $T(x,y)$ when the underlying stochastic differential equations
\ba \label{eq:langevin_supp}
\dot{x}&=&f_x(x,y)+g_{x1}(x,y)\xi_1(t)+g_{x2}(x,y)\xi_2(t)\\
\nn
\dot{y}&=&f_y(x,y)+g_{y1}(x,y)\xi_1(t)+g_{y2}(x,y)\xi_2(t)
\ea
are understood in the Ito and the Stratonovich interpretations \cite{Gar97}.
In both cases the mean--return-time function $T(x,y)$ satisfies the inhomogeneous PDE
$\mathcal{L}^\dagger[T]=-1$ with adjoint reflecting boundary conditions on the inner and outer domain boundaries, and a jump of $\overline{T}$ across an arbitrary simple curve connecting the inner and outer boundaries.  The form of the adjoint Kolmogorov operator (generator of the Markov process) is given below.  

\subsection{Ito interpretation}

Under the Ito interpretation of the SDE \e{langevin_supp}, the form of $\mathcal{L}^\dagger$ is the same as that given with coordinates $(\alpha,\beta)$ in the main text (cf.~\eqref{eq:Ldag_u}).
\begin{align}\label{eq:Ldag_u_supp1}
{\cal L}^\dagger[u]=&\left[f_x\frac\partial{\partial x}+f_y\frac\partial{\partial y}+\frac12\left(\mathcal{G}_{xx}\frac{\partial^2}{\partial x^2}+\mathcal{G}_{xy}\frac{\partial^2}{\partial xy}+\mathcal{G}_{yx}\frac{\partial^2}{\partial yx}+\mathcal{G}_{yy}\frac{\partial^2}{\partial y^2}\right)\right][u],
\end{align}
where $\mathcal{G}=gg^\intercal$ and $g=\matrix{cc}{g_{x1}&g_{x2}\\g_{y1}&g_{y2}}$.  
The adjoint reflecting boundary conditions at an exterior boundary with normal vector $(n_1,n_2)^\intercal$ are 
\be
\sum_{j}n_j\sum_{k} \mathcal{G}_{jk}\partial_k T =0.
\ee

Writing out $\mathcal{L}^\dagger$ explicitly in terms of our drift and diffusion coefficients gives
\begin{align}\label{eq:Ldag_u_supp2}
{\cal L}^\dagger=&f_x\frac\partial{\partial x}+f_y\frac\partial{\partial y}
+\frac12
\left((g_{x1}^2+g_{x2}^2)\frac{\partial^2}{\partial x^2}
+2(g_{x1}g_{y1}+g_{x2}g_{y2})\frac{\partial^2}{\partial xy}
+(g_{y1}^2+g_{y2}^2)\frac{\partial^2}{\partial y^2}
\right).
\end{align}

\subsection{Stratonovich interpretation}

If we interpret the SDE \e{langevin_supp} in the sense of Stratonovich, this is equivalent to an Ito interpretation of an SDE with modified drift coefficients $\hat{f}_x,\hat{f}_y$:
\ba \label{eq:langevin_Strat_supp}
\dot{x}&=&\hat{f}_x(x,y)+g_{x1}(x,y)\xi_1(t)+g_{x2}(x,y)\xi_2(t)\\
\nn
\dot{y}&=&\hat{f}_y(x,y)+g_{y1}(x,y)\xi_1(t)+g_{y2}(x,y)\xi_2(t),
\ea
where 
\ba \label{eq:Strat_f_hat}
\hat{f}_x&= f_x+\frac12
\left( g_{x1}\frac{\partial}{\partial x} g_{x1}+g_{x2}\frac{\partial}{\partial x} g_{x2}+g_{y1}\frac{\partial}{\partial y} g_{x1}+g_{y2}\frac{\partial}{\partial y} g_{x2} \right)
\\ \nn
\hat{f}_y&= f_y+\frac12
\left( g_{x1}\frac{\partial}{\partial x} g_{y1}+g_{x2}\frac{\partial}{\partial x} g_{y2}+g_{y1}\frac{\partial}{\partial y} g_{y1}+g_{y2}\frac{\partial}{\partial y} g_{y2} \right);
\ea
compare eqs.~(4.3.40, 4.3.43) of  \cite{Gar97}.  

Substituting the modified drift coefficients $\hat{f}$ in to \eqref{eq:Ldag_u_supp1} or \eqref{eq:Ldag_u_supp2} gives the form of the PDE satisfied by $T$.  The change of interpretation from Ito to Stratonovich does not change the diffusion coefficients $g_{ij}$, and thus the adjoint reflecting boundary conditions are identical in both cases.


\end{document}